\def \RN {{\mathbb {R}^N}}
\def \G {{\mathbb {G}}}
\def \R {{\mathbb {R}}}
\def \de {\partial}
\def \d {\mathrm{d}}
\def \LL {\mathcal{L}}
\def \RM {\check{H}}
\def \erre {{\mathbb {R}}}
\def \gi {{\mathbb {G}}}
\def \elle {{\mathcal {L}}}
\def \dive{\mathrm{div}}
\def\erren{{\erre^n}}
\def\inn{\ \mbox{ in }}
\newcommand{\ttende}{\longrightarrow}
\newtheorem{theorem}{Theorem}[section]
\newtheorem{proposition}[theorem]{Proposition}
\newtheorem{corollary}[theorem]{Corollary}
\newtheorem{lemma}[theorem]{Lemma}
\newtheorem{definition}[theorem]{Definition}
\newtheorem*{lemA}{Lemma A}
\newtheorem*{thmB}{Theorem B}
\newtheorem*{propC}{Proposition C}
\theoremstyle{remark}
\newtheorem{example}[theorem]{Example}
\theoremstyle{remark}
\newtheorem{remark}[theorem]{Remark}
\numberwithin{equation}{section}
\begin{document}
\title[Weighted ${L^p}$-Liouville Theorems]{Weighted ${L^p}$-Liouville Theorems\\ for Hypoelliptic Partial Differential\\ Operators on Lie Groups}

 \author{Andrea Bonfiglioli}
 \address{Dipartimento di Matematica,
         Universit\`{a} degli Studi di Bologna\\
         Piazza di Porta San Donato, 5 - 40126 Bologna, Italy}
 \email{andrea.bonfiglioli6@unibo.it}

\author{Alessia E. Kogoj}
 \address{Dipartimento di Matematica,
         Universit\`{a} degli Studi di Bologna\\
         Piazza di Porta San Donato, 5 - 40126 Bologna, Italy}
 \email{alessia.kogoj@unibo.it}

\begin{abstract}
 \noindent We prove \emph{weighted} $L^p$-Liouville theorems for a class of second order
 hypoelliptic partial differential operators $\LL$ on Lie groups $\G$ whose underlying manifold is
 $n$-dimensional space.
 We show that a natural weight is the right-invariant measure $\RM$ of $\G$.
 We also prove Liouville-type theorems for $C^2$ subsolutions in $L^p(\G,\RM)$.
 We provide examples of operators to which our results apply, jointly with an
 application to the uniqueness for the Cauchy problem for the evolution operator
 $\LL-\de_t$.
\end{abstract}
\keywords{Liouville Theorems; Degenerate-elliptic operators; Hypoelliptic operators on Lie groups}
\subjclass[2010]{Primary: 35B53; 35H10; 35R03; Secondary: 35H20; 35J70}


\maketitle

\section{Introduction and main results}\label{sec:intro}

 The aim of this paper is to obtain $L^p$-Liouville properties
 for hypoelliptic linear second order partial differential
 operators $\elle$ (with nonnegative cha\-racte\-ri\-stic form),
 which are left-invariant on a Lie group $\G$ on $n$-dimensional space $\R^n$.
 We shall obtain \emph{weighted} $L^p$-Liouville theorems, in that
 the right-invariant measure of the group $\G$ will play a crucial and natural r\^ole,
 as we will shortly explain.

  Precisely, we assume that $\LL$ has the following
  structure:
  $\LL$ is a linear second order PDO (with vanishing zero-order term) on $n$-dimensional space $\R^n$ whose
  quadratic form is positive semidefinite at every point of $\R^n$; more explicitly, to fix the notation, we require
  that $\LL$ has the coordinate form:
\begin{equation}\label{LLcoordinata}
 \elle = \sum_{i,j=1}^n a_{i,j}(x)\,\frac{\de^2}{\de{x_i}\de{x_j}}  + \sum _{j=1}^n  b_j(x) \frac{\de}{\de{x_j}},
\end{equation}
 with functions $a_{i,j},b_j\in C^\infty(\R^n,\R)$,
 and the matrix $A(x):=(a_{i,j}(x))$ is symmetric and positive semidefinite for every $x\in \R^n$.

 Our assumptions are the following three:
\begin{itemize}
 \item[(ND)] $\LL$ is \emph{non-totally degenerate} at every $x\in \R^n$, that is $A(x)\neq 0$
 for every $x\in \R^n$.

 \item[(HY)] $\elle$ is \emph{hypoelliptic} in every open subset of $\R^n$,
 that is, if $U\subseteq W\subseteq\RN$ are open sets, any $u\in \mathcal{D}'(W)$
 which is a weak solution to $\elle u=h$ in $\mathcal{D}'(U)$, with $h\in C^\infty(U,\R)$, is itself
 a smooth function on $U$.

 \item[(LI)] There exists a Lie group $\G=(\erren,\cdot)$ such that $\elle$ is \emph{left invariant} on $\G$.
\end{itemize}
\begin{remark}
 Actually, under hypothesis (LI), assumption (ND) is a very mild condition; indeed it is easy to check that, if $\LL$ is left invariant, then
 (ND) holds true if and only if
 there exists $x_0\in \R^n$ such that $A(x_0)\neq 0$ (which is equivalent to requiring that $\LL$ is not a merely first-order operator).

 We observe that a set of explicit necessary and sufficient conditions ensuring
 hy\-po\-the\-sis (LI) has been recently given by Biagi and the
 first-named author in \cite{BiagiBonfiglioli}.
\end{remark}

 We now fix a notation: in what follows we shall denote
 by $\RM$ a fixed right-invariant measure on the Lie group $\G$ in assumption (LI).
 Since any two right-invariant measures differ by a positive scalar multiple, we fix once
 and for all $\RM$ in the following (explicit way):
 given $x\in\R^n$ we set
 \begin{equation*}
    \rho_x:\R^n\to\R^n,\quad \rho_x(y):=y\cdot x
 \end{equation*}
 to denote the right translation by $x$; then it is easy to verify that the measure
\begin{equation}\label{traslaarhoRM}
  E\mapsto \RM(E):=\int_E \frac{1}{\det \big(\mathcal{J}_{\displaystyle\rho_x}(e)\big)}\,\d x\qquad
\end{equation}
 (defined on the Lebesgue measurable sets $E\subseteq\R^n$)
 is a right-invariant measure on the Lie group $\G$. Here and in the sequel we agree to denote
 by $\d x$ the Lebesgue integration on $\R^n$. The notation $\RM$ comes from the usual duality existing
 between left-invariant measures $\mu$ and right-invariant measures $\check{\mu}$:
 $$\mu\longmapsto \check{\mu}\quad \text{where $\check{\mu}(E)=\mu(\iota(E))$,} $$
 where $\iota$ is the group inversion on $\G$. Even if we will not use any Haar measure $H$
 of $\G$, we prefer to use the symbol $\RM$ to avoid any confusion with left invariance and, at the same time,
 in order to emphasize the r\^ole of right invariance in our Liouville results.

 Throughout, $L^p(\R^n,\RM)$ (for any $p\in [1,\infty]$) will
 denote the associated $L^p$-space on $\G \equiv \R^n$ with respect to the measure $\RM$.\medskip

 In the sequel, we say that a function $u\in C^2(\erren, \erre)$ is
 \begin{itemize}
   \item \emph{$\LL$-harmonic} on $\R^n$ if $\LL u=0$ on $\R^n$;
   \item  \emph{$\LL$-subharmonic} on $\R^n$ if it satisfies
 $\elle u \geq  0$ on $\erren$.
 \end{itemize}

 We are now ready to state the main results of this paper, the following weighted $L^p$-Liouville theorems.
\begin{theorem}[Weighted $L^p$-Liouville Theorem for the $\LL$-harmonic functions]\label{primo}
 Suppose that $\LL$ satisfies assumptions \emph{(ND)}, \emph{(HY)}, \emph{(LI)}.

 Let $u\in C^\infty (\erren, \erre)$ be an $\LL$-harmonic function.\medskip

Then $u\equiv 0$ if one of the following conditions is satisfied:
\begin{itemize}
 \item[{(i)}]  $u\in L^p(\erren,\RM)$ for some $p\in [1,\infty[$;
 \item[{(ii)}] $u\geq 0$ and $u^p\in L^1(\erren, \RM)$ for some $p\in\, ]0,1[.$
\end{itemize}
 $\RM$ denotes the right-invariant measure on $\G$ defined in \eqref{traslaarhoRM}.
\end{theorem}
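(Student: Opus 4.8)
The plan is to exploit the single structural fact that makes $\RM$ the ``right'' weight: since $\RM$ is a right-invariant (Haar) measure, it is preserved by all right translations, and the flow of a \emph{left}-invariant vector field $X$ is precisely right translation by $\exp(tX)$; hence every left-invariant vector field is \emph{skew-adjoint} on $L^2(\erren,\RM)$, i.e. $\int (Xf)\,g\,\d\RM=-\int f\,(Xg)\,\d\RM$ for $f,g\in C_c^\infty$. Because $\LL$ is left-invariant with positive semidefinite principal part, I would first write it in H\"ormander form $\LL=\sum_{k=1}^m X_k^2+X_0$, where $X_0,X_1,\dots,X_m$ are left-invariant: this is possible since the principal symbol corresponds to a left-invariant, positive semidefinite quadratic form on the Lie algebra, which decomposes into rank-one terms $X_k\otimes X_k$, the remainder $\LL-\sum_k X_k^2$ being a left-invariant first-order field $X_0$. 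The carr\'e du champ is then $\Gamma(u,u)=\sum_k (X_ku)^2=|\nabla_A u|^2$, and skew-adjointness gives the formal $\RM$-adjoint $\LL^\ast=\sum_k X_k^2-X_0$; in particular $\int \LL\phi\,\d\RM=0$ for $\phi\in C_c^\infty$, so $\RM$ is $\LL$-invariant.

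The core is a Caccioppoli-type estimate. For a convex weight $\Phi$ and a cutoff $\varphi\in C_c^\infty$, I would test the identity $\LL u=0$ against $\varphi^2\Phi'(u)$ and integrate by parts using the skew-adjointness of each $X_k$ and of $X_0$. Using $\Phi'(u)X_k u=X_k(\Phi(u))$ and Young's inequality to absorb the cross terms, this yields, for $u\ge 0$ (or for $|u|$ after regularizing),
\[
 \int \varphi^2\,\Phi''(u)\,|\nabla_A u|^2\,\d\RM \;\le\; C\int \frac{(\Phi'(u))^2}{|\Phi''(u)|}\,|\nabla_A\varphi|^2\,\d\RM+C\int \varphi\,|X_0\varphi|\,\Phi(u)\,\d\RM .
\]
Choosing $\Phi(u)\sim |u|^p$ makes $(\Phi')^2/|\Phi''|\sim |u|^p$, so the right-hand side is controlled by $\int |u|^p\big(|\nabla_A\varphi|^2+\varphi|X_0\varphi|\big)\,\d\RM$. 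For case (ii) the only change is bookkeeping: with $p\in\,]0,1[$ the function $u^p$ is $\LL$-superharmonic ($\Phi''<0$), but since $p(1-p)>0$ the same inequality survives with $|\Phi''|$ in place of $\Phi''$. I would then take cutoffs $\varphi_R=\eta(d(e,\cdot)/R)$ built from a complete left-invariant Riemannian distance $d$ on $\G$; every left-invariant field has constant length, so $|\nabla_A\varphi_R|+|X_0\varphi_R|\le C/R$ on the annular support. Since $|u|^p\in L^1(\RM)$, the right-hand side is $O(1/R)$ and vanishes as $R\to\infty$, whence $\int |u|^{p-2}|\nabla_A u|^2\,\d\RM=0$ by monotone convergence, i.e. $\nabla_A u\equiv 0$ on $\{u\ne 0\}$.

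It remains to upgrade $\nabla_A u\equiv0$ to $u\equiv0$. On the open set $\{u\ne0\}$ we have $X_ku=0$ for every $k$, hence $X_0u=\LL u-\sum_k X_k^2u=0$ as well; thus all of $X_0,\dots,X_m$ annihilate $u$, and so do all their iterated commutators. Since $\LL$ is hypoelliptic (HY), the system $X_0,\dots,X_m$ satisfies H\"ormander's rank condition, so these brackets span the tangent space at each point; therefore $\nabla u\equiv0$ and $u$ is locally constant on $\{u\ne0\}$. A connectedness/continuity argument (a component on which $u\equiv c\ne0$ is both open and closed) forces $\{u\ne0\}=\emptyset$ or $\{u\ne0\}=\erren$; in the latter case $u\equiv c\ne0$ would contradict $u\in L^p(\erren,\RM)$, because $\G$ is non-compact and hence $\RM(\erren)=\infty$. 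Either way $u\equiv0$.

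I expect the endpoint $p=1$ to be the main obstacle, since there $\Phi(u)=|u|$ gives $\Phi''\equiv0$ and the absorption in the Caccioppoli inequality collapses. I would treat it separately, either by the regularization $\Phi_\epsilon(u)=\sqrt{u^2+\epsilon^2}$ combined with a careful joint limit in $\epsilon$ and $R$, or by first establishing a Cheng--Yau-type gradient bound $|\nabla_A u(x)|\le C R^{-1}\,\RM(B_R)^{-1}\int_{B_R}|u|\,\d\RM$ and letting $R\to\infty$ using $\RM(B_R)\to\infty$. Two further technical points deserve care: constructing cutoffs that simultaneously control the horizontal gradient and the drift $X_0$ (handled by the left-invariant distance above), and, if not already isolated as a preliminary lemma, the skew-adjointness of left-invariant fields with respect to $\RM$, which is the structural heart of the whole argument.
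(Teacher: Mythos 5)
Your route is genuinely different from the paper's: instead of the Kogoj--Lanconelli representation formula \eqref{representation} and the mean-value identity $\int M(u)\,\d\RM=\int u\,\d\RM$ (Lemma \ref{elleunoelleuno}, the only place where right invariance enters there), you exploit right invariance through the skew-adjointness of left-invariant fields on $L^2(\erren,\RM)$ and run a Caccioppoli cutoff argument. That structural observation is correct (the flow of a left-invariant field is a right translation, which preserves $\RM$), the decomposition $\LL=\sum_k X_k^2+X_0$ into left-invariant fields is legitimate (the paper obtains it in Remark \ref{remproprietafurther}(a) via Poincar\'e--Birkhoff--Witt), and your final rigidity step is essentially the paper's Lemma A: analyticity of the left-invariant coefficients plus (HY) yields H\"ormander's rank condition, brackets of fields annihilating $u$ still annihilate $u$, Chow connectivity gives local constancy, and $\RM(\erren)=\infty$ is exactly Lemma \ref{summab.HHHH}. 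For $p\in\,]1,\infty[$ your computation closes: with $\Phi_\eps(t)=(t^2+\eps^2)^{p/2}$ one has $(\Phi_\eps')^2/\Phi_\eps''\le \tfrac{p}{p-1}(t^2+\eps^2)^{p/2}$, and Fatou in $\eps$ followed by $R\to\infty$ works. In case (ii), however, your stated choice $\Phi(t)=(1+t)^p-1$ gives $(\Phi')^2/|\Phi''|=\tfrac{p}{1-p}(1+u)^p\ge \tfrac{p}{1-p}$, and the constant contributes $R^{-2}\,\RM(\mathrm{annulus})$, which need not vanish: balls for a left-invariant distance can have exponential $\RM$-volume growth, and the nonunimodular groups are precisely the new cases this theorem targets. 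You must instead take $\Phi_\eps(t)=(t+\eps)^p-\eps^p$, whose ratio is $\lesssim (u+\eps)^p$, and send $\eps\to 0$ first; this is fixable, but it is not the ``bookkeeping'' you described.

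The genuine gap is the endpoint $p=1$, which the theorem includes and which you flag but do not close; neither of your fallbacks works. With $\Phi_\eps(t)=\sqrt{t^2+\eps^2}$ one computes $(\Phi_\eps')^2/\Phi_\eps''=u^2\sqrt{u^2+\eps^2}/\eps^2\approx |u|^3/\eps^2$ where $|u|\gg\eps$, so any joint limit $\eps=\eps(R)$ requires control of $\int|u|^3$ over annuli, which $u\in L^1(\erren,\RM)$ does not provide ($u$ is not a priori bounded). Cheng--Yau-type gradient bounds are likewise unavailable at this level of generality: they require curvature(-dimension) hypotheses that degenerate H\"ormander operators with drift on general (in particular nonunimodular) groups do not satisfy. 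This is exactly where the paper's global argument pays off: setting $v:=F(u)$ with $F(t)=(\sqrt{1+t^2}-1)^p$, which is $C^2$ with $F''>0$ off the origin \emph{even for} $p=1$, identity \eqref{ruolPsi} makes $v$ an $L^1(\RM)$ subsolution, and then Lemma \ref{elleunoelleuno} forces $\int N(\LL v)\,\d\RM=0$, whence $\LL v\equiv 0$ by positivity and continuity (Corollary \ref{corollarytredue}) --- no absorption, no cutoffs, hence no loss at the endpoint. As it stands, your proposal proves case (i) only for $p\in\,]1,\infty[$; to recover $p=1$ you would have to import the mean-value mechanism or find a genuinely new $L^1$ argument.
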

  The classical form of Liouville's theorem for $\LL$-harmonic functions
 (i.e., under the assumption $\LL u=0$ and the  ``one-side'' bound $u\geq 0$ on the whole space) cannot be expected
 under our general hypotheses where operators with first-order terms as in \eqref{LLcoordinata} are allowed:
 for example, the classical Heat operator $$\LL=\sum_{j=1}^n (\de_{x_j})^2-\de_t\quad\text{in $\R^{n+1}=\R^n_x\times \R_t$}$$
 satisfies all the assumptions (ND), (HY) and (LI) (the latter w.r.t.\,the usual structure $\G=(\R^{n+1},+)$), but the function
 $\exp(x_1+\cdots+x_n+n\,t)$ is $\LL$-harmonic and nonnegative in space $\R^{n+1}$. 
 ``One-side'' Liouville-type theorems for some classes of homogeneous ope\-ra\-tors are proved in 
 \cite{kogoj_lanconelli1, kogoj_lanconelli2, kogoj_lanconelli3}.

 Our second main result, for $\LL$-subharmonic functions, is the following one:
\begin{theorem}[Weighted $L^p$-Liouville Theorem for the $\LL$-subharmonic functions]\label{secondo}
 Suppose that $\LL$ satisfies assumptions \emph{(ND)}, \emph{(HY)}, \emph{(LI)}.
 Let $u\in C^2(\erren, \erre)$ be an $\LL$-sub\-har\-mo\-nic function on $\R^n$.

 If $u\in L^p(\erren,\RM)$ for some $p\in [1,\infty[$, then  $u\le 0.$\medskip

 In particular, any nonnegative $\LL$-subharmonic function is identically zero,
 provided that $u\in L^p(\erren,\RM)$ for some $p\in [1,\infty[$.
\end{theorem}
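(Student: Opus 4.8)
The plan is to reduce the subharmonic statement to the already-established harmonic Liouville Theorem~\ref{primo}, applied to a suitable nonnegative power of the positive part of $u$. Since $u\le 0$ on $\erren$ is equivalent to $u^+:=\max\{u,0\}\equiv 0$, it suffices to prove that $u^+$ vanishes identically. I fix the exponent $p\in[1,\infty[$ for which $u\in L^p(\erren,\RM)$ and set $w:=(u^+)^p$. First I would record that $w\ge 0$ lies in $L^1(\erren,\RM)$, since $\int_{\erren} w\,\d\RM=\int_{\erren}(u^+)^p\,\d\RM\le \int_{\erren}|u|^p\,\d\RM<\infty$. Next I would check that $w$ is $\LL$-\emph{subharmonic in the distributional sense}: writing $w=\Phi\circ u$ with $\Phi(t)=(t^+)^p$ convex and nondecreasing, the chain rule for the operator in the coordinate form \eqref{LLcoordinata} gives, for smooth $\Phi$,
\[
 \LL(\Phi\circ u)=\Phi'(u)\,\LL u+\Phi''(u)\sum_{i,j=1}^n a_{i,j}(x)\,\partiali u\,\partialj u\ge 0,
\]
because $\Phi',\Phi''\ge 0$, $\LL u\ge 0$, and the quadratic form attached to $A(x)\ge 0$ is nonnegative. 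For $1\le p<2$, where $\Phi$ fails to be $C^2$, I would obtain $\LL w\ge 0$ as a nonnegative Radon measure by approximating $\Phi$ with smooth convex nondecreasing functions and passing to the limit.

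The heart of the argument is to show that the nonnegative measure $\LL w$ is in fact zero. Here the interplay between the left invariance of $\LL$ and the right invariance of $\RM$ is decisive: fixing a basis $X_1,\dots,X_n$ of left-invariant vector fields on $\G$, each $X_k$ is \emph{divergence-free} with respect to $\RM$ (its flow is a right translation, under which $\RM$ is invariant), so that $\int_{\erren}(X_kf)\,\d\RM=0$ for $f\in C_c^\infty$. Consequently, in the left-invariant frame $\LL=\sum_{i,j}\alpha_{i,j}X_iX_j+\sum_k\beta_kX_k$ has \emph{constant} coefficients, and its formal $\RM$-adjoint is the left-invariant operator $\LL^\ast=\sum_{i,j}\alpha_{i,j}X_iX_j-\sum_k\beta_kX_k$. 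I would then test $\LL w\ge 0$ against an exhausting family of cutoffs $\psi_R\uparrow 1$, with $0\le\psi_R\le 1$ and $\psi_R\in C_c^\infty$, obtaining
\[
 0\le \int_{\erren}\psi_R\,\d(\LL w)=\int_{\erren} w\,(\LL^\ast\psi_R)\,\d\RM .
\]
Choosing the $\psi_R$ so that $\LL^\ast\psi_R$ is uniformly bounded and supported in shells escaping to infinity, the right-hand side is at most $\|\LL^\ast\psi_R\|_\infty\int_{\{\mathrm{shell}_R\}}w\,\d\RM$, which tends to $0$ as $R\to\infty$ because $w\in L^1(\erren,\RM)$ forces the tail integrals to vanish. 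By monotone convergence the left-hand side tends to $(\LL w)(\erren)$, whence $(\LL w)(\erren)=0$; being a nonnegative measure, $\LL w\equiv 0$.

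It then remains to conclude. Since $w$ is a distributional solution of $\LL w=0$ (a notion insensitive to the smooth positive density of $\RM$) and the right-hand side is smooth, hypoellipticity (HY) upgrades $w$ to a function in $C^\infty(\erren,\erre)$ that is classically $\LL$-harmonic. As $w\ge 0$ and $w\in L^1(\erren,\RM)$, Theorem~\ref{primo}(i) with exponent $1$ applies and yields $w\equiv 0$, i.e. $u^+\equiv 0$ and hence $u\le 0$. The final ``in particular'' assertion is then immediate: if moreover $u\ge 0$, then $0\le u\le 0$, so $u\equiv 0$.

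The step I expect to be the main obstacle is the construction of the cutoffs $\psi_R$. In the left-invariant frame the coefficients of $\LL^\ast$ are constant, so everything reduces to controlling $X_k\psi_R$ and $X_iX_j\psi_R$; taking $\psi_R=\eta(\tau/R)$ for a fixed profile $\eta$ and a proper function $\tau$ adapted to a control distance for $\LL$ gives $|X_k\psi_R|\lesssim R^{-1}$ and $|X_iX_j\psi_R|\lesssim R^{-2}$ on the shell $\{\tau\sim R\}$. The delicate point is that, since a genuine drift $\sum_k\beta_kX_k$ is allowed and $\G$ need not be homogeneous, this control distance must be the one attached to the \emph{full} H\"ormander system underlying (HY), with the first-order drift counted as a second-order direction; securing the existence of such an exhaustion with the stated gradient and Hessian bounds—equivalently, good cutoffs on escaping shells with $\LL^\ast\psi_R$ uniformly bounded—is where the hypoellipticity and the sub-Riemannian geometry of $\G$ genuinely enter, and is the technical core one must establish.
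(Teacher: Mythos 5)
Your strategy is sound in spirit, and your key structural observation---that left-invariant vector fields are divergence-free with respect to the right-invariant measure $\RM$, so that the formal $\RM$-adjoint $\LL^*$ has no zero-order term---is exactly the same duality between left invariance of $\LL$ and right invariance of $\RM$ that powers the paper's proof (where it appears as Lemma \ref{elleunoelleuno}, i.e.\ the identity $\int M(u)\,\d\RM=\int u\,\d\RM$). But there is a genuine gap at the step you yourself flag: the existence of cutoffs $\psi_R\uparrow 1$ with $\LL^*\psi_R$ uniformly bounded and supported in shells escaping to infinity is asserted, not proved, and in the paper's generality it is not available off the shelf. Regularizing the control distance yields the first-order bounds $|X_k\psi_R|\lesssim R^{-1}$ from the a.e.\ eikonal inequality, but the second-order bounds $|X_iX_j\psi_R|\lesssim R^{-2}$ do \emph{not} follow: the Carnot--Carath\'eodory distance is in general not smooth (nor semiconcave) away from the origin, and such ``Laplacian cutoff'' exhaustions are known only under extra geometric hypotheses (homogeneity, polynomial volume growth, doubling or curvature-type conditions), none of which are assumed here---$\G$ may be non-unimodular with exponential volume growth, and $\LL$ carries a genuine drift that must be counted as a second-order direction. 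Since this is the entire heart of your argument (the convex regularization of $(t^+)^p$ for $1\le p<2$, the identification of $\LL w$ as a nonnegative measure, the hypoelliptic upgrade, and the appeal to Theorem \ref{primo}(i) are all routine once that lemma is granted), the proof as written is incomplete.

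The paper circumvents precisely this obstacle, and this is the main point of the Kogoj--Lanconelli technique it adapts: instead of a global integration by parts against an exhaustion, it uses the Poisson--Jensen-type representation formula $v=M(v)-N(\LL v)$ of Theorem B, valid for every $v\in C^2(\R^n,\R)$, integrates it in $\d\RM$ over the whole group, and exploits right invariance through Fubini (Lemma \ref{elleunoelleuno}) to conclude that every $\LL$-subharmonic function in $L^1(\R^n,\RM)$ is in fact $\LL$-harmonic (Corollary \ref{corollarytredue}, via positivity and continuity of $N(\LL v)$)---no distance function, no volume estimates, no cutoffs. It also sidesteps your regularity issue for $(t^+)^p$ by the explicit $C^2$ choice $F(t)=\big(\sqrt[4]{1+t^4}-1\big)^p$ for $t>0$, $F=0$ for $t\le 0$, and then, rather than invoking Theorem \ref{primo} as a black box, finishes with Lemma A (the H\"ormander connectivity argument) on the connected components of $\{u>0\}$, a boundary argument, and Lemma \ref{summab.HHHH}. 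If you replace your cutoff step by Corollary \ref{corollarytredue} applied to $v=F(u)$, your reduction goes through; as it stands, the cutoff existence is an unproved lemma that cannot be established in this generality without substantially new ideas.
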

 For the proofs of Theorems \ref{primo} and \ref{secondo} we closely follow the techniques
 recently introduced by Lanconelli and the second-named author in \cite[Th. 1.1, 1.2, 1.3]{kogoj_lanconelli_2014}, where
 unimodular Lie groups are considered (with $\RM$ equal to the Lebesgue measure):
 the ideas introduced in \cite{kogoj_lanconelli_2014} can be adapted to our (more general) framework,
 since they rely on a very versatile technique based on the use of convex functions of the global solution
 to $\LL u=0$, together with a general representation formula (of Poisson-Jensen type; see
 also \eqref{representation}). The novelty of our case is the use of the right-invariant measure $\RM$; this allows us
 to encompass new examples, of interest, as the following one.
\begin{example}\label{exe.colmo}
Let us consider in $\erre^{n+1}=\erre^n_x\times\erre_t$ the {\it
Kolmogorov-type operators}
\begin{equation}\label{kolmogorov}
 \elle = \dive (A\nabla) + \langle Bx,\nabla \rangle - \partial_t,\end{equation}
 where $A$ and $B$ are constant $n\times n$ real matrices, and $A$
 is  symmetric and positive semidefinite.
 Let us define the matrix
$$E(s):=\exp(-sB),\quad s\in\erre.$$
 Then the operator $\elle$ in \eqref{kolmogorov} satisfies assumption
 (LI) w.r.t.\,the Lie group $\mathbb{G}=(\erre^{n+1},\cdot)$
 with composition law
\begin{equation*}
 (x,t)\cdot (x',t')=(x'+E(t')x, t+t').
\end{equation*}
 Since $\det(E(t))=\exp(-t\,\textrm{trace}(B))$, according to formula \eqref{traslaarhoRM} the associated right-invariant
 measure $\RM$ is equal to
\begin{equation}\label{ipoipotesiMISU}
  \d\RM(x,t)= e^{t\,\textrm{trace}(B)}\,\d x\d t.
\end{equation}
 Moreover, if we assume that the matrix
\begin{equation}\label{ipoipotesi}
 \int_0^t E(s) \,A\, (E(s))^T\,\d s \quad\text{is positive definite for
 all $t>0$},
\end{equation}
 then $\elle$ is hypoelliptic (see e.g., \cite{lanconelli_polidoro_1994}; see also
  \cite[Sections 4.1.3, 4.3.4]{BLU}) so that hypothesis (HY) is satisfied as well. Condition \eqref{ipoipotesi} also
  encloses condition  (ND) (since \eqref{ipoipotesi} cannot hold if $A=0$).
  Hence, under condition \eqref{ipoipotesi}, the operator $\LL$
  satisfies all our assumptions and the weighted $L^p$-Liouville
  Theorems \ref{primo} and \ref{secondo} hold true w.r.t.\,the
  explicit measure $\RM$ in \eqref{ipoipotesiMISU}.\medskip

  For a class of operators (encompassing the above hypoelliptic operator $\LL$), we also prove
  a uniqueness result for the Cauchy problem (see Section \ref{sec:Cauchyuniq}); for simplicity we here state
 this result for the above operator $\LL$ (see Proposition \ref{pop:cauchyU}
 for the larger class of operators to which this uniqueness result applies):
\begin{corollary}\label{cor.tyko}
 Let us denote by $\Omega $ the half-space $\{(x,t)\in \R^{n+1}:t>0\}$.
  If $\LL$ is the operator \eqref{kolmogorov} and if
  the hypoellipticity condition \eqref{ipoipotesi} is satisfied,
  any classical solution $u\in C^\infty(\Omega )\cap C(\overline{\Omega })$
  to the Cauchy problem
  \begin{equation*}
    \left\{
      \begin{array}{ll}
        \LL u=0 &\hbox{in $\Omega $} \\
        u(x,t)=0&\hbox{for $t=0$}
      \end{array}
    \right.
  \end{equation*}
  is identically zero on $\Omega $ if it holds that
  $$\int_0^\infty \int_{\R^n} |u(x,t)|^p\,e^{t\,\textrm{\emph{trace}}(B)}\,\d x\,\d t<\infty, $$
  for some $p\in [1,\infty)$.
\end{corollary}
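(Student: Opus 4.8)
The plan is to deduce the corollary from Theorem \ref{primo} by extending $u$ to a globally $\LL$-harmonic function on all of $\R^{n+1}$. Since $u\in C(\overline{\Omega})$ and $u(\cdot,0)\equiv 0$, the zero-extension
\[
 \tilde u(x,t):=
 \begin{cases}
  u(x,t) & \text{if } t>0,\\
  0 & \text{if } t\le 0,
 \end{cases}
\]
is continuous on $\R^{n+1}$, hence $\tilde u\in\elleunoloc$ and defines a distribution on $\R^{n+1}$. The whole argument rests on showing that this naive extension is, surprisingly, an \emph{exact} (distributional) solution of $\LL\tilde u=0$ across the hyperplane $\{t=0\}$; once this is known, hypoellipticity and the Liouville theorem finish the job.

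The heart of the matter — and the step I expect to require the most care — is the identity $\LL\tilde u=0$ in $\mathcal{D}'(\R^{n+1})$. Given $\varphi\in C_c^\infty(\R^{n+1})$, I would write $\langle\tilde u,\LL^*\varphi\rangle=\int_{t>0}u\,\LL^*\varphi\,\d x\,\d t$, integrate by parts on the region $\{t>\eps\}$ (where $u$ is smooth and $\LL u=0$), and then let $\eps\to0^+$. The key structural observation is that the only second-order part of $\LL$, namely $\dive(A\nabla)$, differentiates in the $x$-variables alone, while the boundary $\{t=\eps\}$ is a level set of $t$; consequently Green's formula produces \emph{no normal-derivative term} on that boundary, and the sole boundary contribution comes from the first-order term $-\de_t$, equal to $-\int_{\R^n}u(x,\eps)\,\varphi(x,\eps)\,\d x$. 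Because $u$ is continuous up to $t=0$ with $u(\cdot,0)\equiv 0$, this term tends to $0$ as $\eps\to0^+$, leaving $\langle\tilde u,\LL^*\varphi\rangle=0$. This is precisely where the evolution (first-order-in-$t$) structure of $\LL$ is indispensable: for an operator genuinely second order in $t$ the zero-extension would retain a jump in the conormal derivative along $\{t=0\}$ and would fail to be a global solution.

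With $\LL\tilde u=0$ established weakly, hypothesis (HY) upgrades $\tilde u$ to a function in $C^\infty(\R^{n+1},\R)$ that is $\LL$-harmonic everywhere. It then remains to record the integrability. By \eqref{ipoipotesiMISU} the right-invariant measure for $\LL$ is $\d\RM(x,t)=e^{t\,\mathrm{trace}(B)}\,\d x\,\d t$, so, since $\tilde u$ vanishes for $t\le 0$,
\[
 \int_{\R^{n+1}}|\tilde u|^p\,\d\RM=\int_0^\infty\!\!\int_{\R^n}|u(x,t)|^p\,e^{t\,\mathrm{trace}(B)}\,\d x\,\d t<\infty,
\]
that is $\tilde u\in L^p(\R^{n+1},\RM)$ with $p\in[1,\infty[$. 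Condition \eqref{ipoipotesi} guarantees that $\LL$ satisfies (ND), (HY) and (LI), so Theorem \ref{primo}(i) applies to $\tilde u$ and yields $\tilde u\equiv 0$ on $\R^{n+1}$; restricting to $\Omega$ gives $u\equiv 0$, as claimed.
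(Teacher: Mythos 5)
Your proposal is correct and follows essentially the same route as the paper, which deduces Corollary \ref{cor.tyko} from the more general Proposition \ref{pop:cauchyU}: there, exactly as in your argument, the solution is extended by zero across $\{t=0\}$, the identity $\LL \tilde u=0$ in $\mathcal{D}'(\R^{n+1})$ is verified by integrating by parts on $\{t>\eps\}$ (using that $L$ acts only in the $x$-variables, so the sole boundary contribution is $-\int_{\R^n}u(x,\eps)\,\varphi(x,\eps)\,\d x$, which vanishes as $\eps\to 0^+$ by the initial condition and continuity up to $t=0$), hypoellipticity (HY) upgrades $\tilde u$ to a smooth global $\LL$-harmonic function, and Theorem \ref{primo}(i) with the right-invariant measure \eqref{ipoipotesiMISU} concludes. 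Your structural remark that the first-order-in-$t$ (evolution) form of $\LL$ is what prevents a conormal-derivative jump along $\{t=0\}$ matches the paper's hypothesis \eqref{formaLLLLCauchy}, and I see no gaps.
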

\end{example}
 Other examples, appearing in the literature, of operators
 satisfying conditions (ND), (HY) and (LI) are:\medskip
 \begin{enumerate}
  \item[(i)]
 the classical Kol\-mo\-go\-rov-Fokker-Planck operator
 $$\mathcal{K}=\sum_{j=1}^n (\de_{x_j})^2+\sum_{j=1}^{n} x_{j}\,\de_{x_{n+j}}-\de_t,$$
 in $\R^{2n+1}=\R^{2n}_x\times \R_t$ (it is of the form  \eqref{kolmogorov} and it satisfies \eqref{ipoipotesi});\medskip

 \item[(ii)]
 $\LL=L-\de_t$ in $\R^3=\R^2_x\times \R_t$, where $L=\tfrac{1}{2}\,(\de_{x_1})^2-(x_1+x_2)\,\de_{x_1}+x_1\,\de_{x_2}$
 ($L$ belongs to a class recently studied by Da Prato and Lunardi, \cite{DaPratoLunardiMATHZ});
 the associated right-invariant measure is
 $$e^{-t}\,\d t\,\d x_1\,\d x_2;$$

  \item[(iii)] the operators $\LL$ considered by
  Lanconelli and the first-named author in
  \cite{bonfiglioli_lanconelli_2012}, together with their evolution
  counterparts $\LL-\de_t$; since this class of PDOs furnishes a wide gallery of
  new examples for weighted $L^p$-Liouville theorems, we shall describe them in detail in Section \ref{sec:examples}.\bigskip
 \end{enumerate}
 Before giving the plan of the paper, we mention some related references from the existing literature:
\begin{itemize}
  \item[-]
 When hypothesis (LI) holds in the stronger form requiring that
 $\G$ is a homogeneous group w.r.t.\,a family of dilations (see \cite[Section 1.3]{BLU} for the relevant definition)
 and $\LL$ is a homogeneous operator, Theorem \ref{primo}
 follows from a general Liouville-type theorem by Geller \cite[Theorem 2]{Geller}.

  \item[-]
 Yet in presence of dilation-homogeneity (but not necessarily under the left-invariance condition (LI)),
 Luo extended Geller's theorem to homogeneous hypoelliptic operators (see \cite[Theorem 1]{Luo}).
 The theorems of Geller and of Luo cannot be applied to subharmonic functions (as in Theorem \ref{secondo} above).

  \item[-]
 For special classes of Lie groups $\G$ (namely, for stratified Lie groups), $L^1$-Liouville
 theorems on half-spaces have been proved
 by Uguzzoni \cite{Uguzzoni} and by the second-named author \cite{kogoj}.
 See also \cite{bonfiglioli_lanconelli_2001} (and \cite[Chapter 5, Section 5.8]{BLU})
 for Harnack-Liouville and asymptotic-Liouville theorems for stratified Lie groups.

 \item[-]
 The $L^\infty$-Liouville property does not hold, in general:
  see Priola and Zabczyk \cite{priola_zabczyk_2004} (see also
  \cite[Remark 8.1]{kogoj_lanconelli_2014}).

\end{itemize}
 The plan of the paper is as follows. Section \ref{sec:conseipo}
 recalls the techniques in \cite{kogoj_lanconelli_2014}, while
 in Section \ref{sec:prove} we prove Theorems \ref{primo} and \ref{secondo}.
 Finally, Section \ref{sec:examples} provides examples of operators to which
 our results apply, together with an application to the uniqueness of the Cauchy problem for a class
 of evolution operators.\bigskip

 \textbf{Acknowledgements.}\,\,The authors wish to thank Ermanno Lanconelli for
 having highlighted the problem and for useful conversations.
 The second-named author
 wishes to thank GNAMPA (Gruppo Nazionale per l'Analisi Matematica, la Probabilit\`a e le
 loro Applicazioni) of the Istituto Nazionale di Alta Matematica (INdAM)
 for supporting her investigation.
 \section{Background results and recalls}\label{sec:conseipo}
 Here and throughout the rest of the paper, we assume that $\LL$ is as in \eqref{LLcoordinata} and that
 the matrix $A(x)=(a_{i,j}(x))$ of the second order part of $\LL$ is symmetric and positive semidefinite
 for every $x\in \R^n$. This will be tacitly understood.
\begin{remark}\label{remproprietafurther}
 (a)\,\,Suppose that $\LL$ satisfies hypothesis (LI). Since
 (by the Campbell-Baker-Hausdorff Theorem; see e.g., \cite{BonfiglioliFulci})
 it is non-restrictive to assume that any Lie group is endowed with an analytic
 structure, then the coefficients of $\LL$
 can be supposed to be of class $C^\omega$.
 We shall assume the latter fact throughout.
 Moreover, by also using the Poincaré-Birkhoff-Witt Theorem,
 one can prove that assumption (LI) (together with the facts that the quadratic form
 of $\LL$ be positive semidefinite and be associated with a symmetric matrix) implies
 that $\LL$ is a sum of squares of vector fields plus a drift.
\medskip

 (b)\,\, We pass from \eqref{LLcoordinata}
  to the quasi-divergence form
\begin{equation}\label{LLcoordinataDIVE}
 \elle = \sum_{i=1}^n \frac{\de}{\de x_i} \Big(\sum_{j=1}^n a_{i,j}(x)\,\frac{\de}{\de{x_j}}\Big)
 + \sum _{j=1}^n \Big( b_j(x) -\sum_{i=1}^n \frac{\de a_{i,j}(x)}{\de x_i}\Big)\frac{\de}{\de{x_j}},
\end{equation}
 and we set
\begin{gather}\label{LLcoordinataDIVE2}
\begin{split}
 X_i &:=\sum_{j=1}^n a_{i,j}(x)\,\frac{\de}{\de{x_j}} \qquad (i=1,\ldots,n),\\
 X_0&:= \sum _{j=1}^n \Big( b_j(x) -\sum_{i=1}^n \frac{\de a_{i,j}(x)}{\de x_i}\Big)\frac{\de}{\de{x_j}}.
\end{split}
\end{gather}
 With this notation, \eqref{LLcoordinataDIVE} becomes
\begin{equation}\label{LLcoordinataDIVE3}
 \elle = \sum_{i=1}^n \frac{\de}{\de x_i}( X_i)+X_0.
\end{equation}
  If $\LL$ satisfies the hypoellipticity condition (HY),
  due to the results in \cite{OleinikRadkevic} (and the $C^\omega$ regularity in (a) above),
  then the vector fields $X_0,X_1,\ldots,X_n$ fulfil H\"ormander's maximal rank condition, \cite{Hormander}.

 Remarks (a) and (b) also motivate the fact that our examples of PDOs satisfying assumptions (HY) and (LI)
 (see Section \ref{sec:examples})
     will fall into the hypoellipticity class of the H\"ormander operators.
\end{remark}
 Then we fix a notation: if $A=(a_{i,j})$ is the second order matrix of $\LL$
 as in \eqref{LLcoordinata}, and if $u$ is of class $C^1$ on some open set, we set
\begin{equation}\label{gradientaA}
   \Psi_A(u)(x):=\sum_{i,j=1}^n a_{i,j}(x)\,\de_{x_i}u(x)\,\de_{x_j}u(x).
\end{equation}
 Notice that, since $A$ is positive semidefinite, one has $\Psi_A(u)=\langle A(x)\nabla u(x),\nabla u(x)\rangle\geq 0$.

 In \cite[Lemma 4.2]{kogoj_lanconelli_2014} it is proved the following result.
\begin{lemA}
 Let $\LL$ be as in \eqref{LLcoordinata} and let $\Psi_A$
 be as in \eqref{gradientaA}, where $A$ is the second order matrix of $\LL$.
 Suppose that the vector fields $X_0,X_1,\ldots,X_n$ in \eqref{LLcoordinataDIVE2} fulfil
 H\"ormander's maximal rank condition.\medskip

 Let $\Omega\subseteq\R^n$ be a connected open set and suppose that
 $u\in C^1(\Omega,\R)$.

 Then the following facts are equivalent:
\begin{enumerate}
  \item[\emph{(1)}] $u$ is constant on $\Omega$;
  \item[\emph{(2)}] $X_0 u,X_1 u,\ldots,X_nu$ all vanish on $\Omega$;
  \item[\emph{(3)}] $ \Psi_A(u)\equiv 0$ and $X_0\equiv 0$ on $\Omega$;
  \item[\emph{(4)}] $u$ is $\LL$-harmonic on $\Omega$ and $\Psi_A (u) \equiv 0$ on $\Omega$.
\end{enumerate}
\end{lemA}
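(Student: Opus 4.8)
The plan is to reduce all four conditions to statements about the first-order derivatives $X_iu$, and then to isolate the single genuinely geometric implication, namely that $(2)$ forces $u$ to be constant. Writing $A(x)\nabla u(x)$ for the column vector with entries $(A\nabla u)_i=\sum_{j}a_{i,j}\,\partialj u$, I would record at the outset the two elementary identities
\begin{equation*}
 X_iu=(A\nabla u)_i\quad(i=1,\dots,n),\qquad
 \Psi_A(u)=\langle A\nabla u,\nabla u\rangle=\sum_{i=1}^n (X_iu)\,\partiali u .
\end{equation*}
Since $A(x)$ is symmetric and positive semidefinite, this yields the pointwise linear-algebra fact
$$\Psi_A(u)(x)=0\iff A(x)\nabla u(x)=0\iff X_1u(x)=\cdots=X_nu(x)=0$$
(the first equivalence because $\langle Av,v\rangle=|A^{1/2}v|^2$, with $A^{1/2}$ the positive semidefinite square root). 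This equivalence is the hinge of the whole argument.

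With it in hand, the three conditions $(2)$, $(3)$, $(4)$ become equivalent by pure bookkeeping. Indeed $\Psi_A(u)\equiv0$ is the same as $X_1u\equiv\cdots\equiv X_nu\equiv0$, so adjoining the common clause $X_0u\equiv0$ gives $(2)\Leftrightarrow(3)$ at once. For $(2)\Leftrightarrow(4)$ I would use the quasi-divergence form $\LL u=\sum_i\partiali(X_iu)+X_0u$ of \eqref{LLcoordinataDIVE3}: if $(2)$ holds then $X_1u,\dots,X_nu$ vanish identically, hence so do the derivatives $\partiali(X_iu)$, whence $\LL u=X_0u=0$ and $\Psi_A(u)\equiv0$, which is $(4)$; conversely, $(4)$ gives $\Psi_A(u)\equiv0$, hence $A\nabla u\equiv0$, hence $\partiali(X_iu)\equiv0$, so that $\LL u=X_0u$, and $\LL$-harmonicity forces $X_0u\equiv0$, i.e.\ $(2)$. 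Note that once $\Psi_A(u)\equiv0$ is known the second-order part of $\LL u$ is identically zero, so the mere $C^1$ regularity of $u$ suffices to make sense of $\LL u$ here. Finally $(1)\Rightarrow(2)$ is trivial, a constant function being annihilated by every $X_j$.

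It remains to prove $(2)\Rightarrow(1)$, which I expect to be the main obstacle and the only place where H\"ormander's maximal rank condition is used. The relations $X_ju\equiv0$ for $j=0,1,\dots,n$ say exactly that $u$ is constant along every integral curve $\gamma$ of each field $X_j$: for a $C^1$ function one has $\tfrac{\d}{\d s}\,u(\gamma(s))=(X_ju)(\gamma(s))=0$, so no second derivatives of $u$ are ever needed. Hence $u$ is constant on the orbit of the family $\{X_0,\dots,X_n\}$ through each point. Because these fields satisfy H\"ormander's condition, the Sussmann orbit theorem (equivalently, Chow--Rashevskii connectivity) shows that each orbit is an open subset of $\Omega$; as $\Omega$ is connected, there is a single orbit, and $u$ is therefore constant on all of $\Omega$. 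The delicate point is precisely this last step: one must invoke the bracket-generating property of the $X_j$ --- a condition on the vector fields, not on $u$ --- to pass from ``constant along the individual flows'' to ``constant on $\Omega$'', thereby circumventing the fact that the $C^1$ regularity of $u$ does not allow one to differentiate the relations $X_ju=0$ and form Lie brackets directly.
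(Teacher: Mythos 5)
Your proposal is correct and follows essentially the same route as the paper's proof: the hinge $\Psi_A(u)\equiv 0\iff X_1u=\cdots=X_nu\equiv 0$ (from positive semidefiniteness of $A$, exactly as in \eqref{annulareXconL}), the quasi-divergence form \eqref{LLcoordinataDIVE3} for the equivalence with $\LL$-harmonicity, and Chow--Rashevsky connectivity for $(2)\Rightarrow(1)$. Your explicit remarks on why $C^1$ regularity suffices --- that $\LL u$ makes sense via \eqref{LLcoordinataDIVE3} once $A\nabla u\equiv 0$, and that constancy along flows needs no brackets of $u$ itself --- are welcome refinements of points the paper leaves implicit, not a different argument.
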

 Due to its relevance in the sequel, we provide the proof of this lemma for the sake of completeness.

\begin{proof}
 Since (by hypothesis) $X_0,X_1,\ldots,X_n$ are bracket-generating vector fields, the
 equivalence of (1) and (2) follows from the well-known Connectivity Theorem of
  Carathéodory-Chow-Rashevsky (see e.g., \cite[Chapter 19]{BLU}).

 Next we recall that, given a symmetric positive semidefinite matrix $A$, then
 $$\langle A\xi,\xi\rangle =0 \quad \text{if and only if}\quad A\xi=0. $$
 As a consequence, $\Psi_A(u)(x)=0$ if and only if $\nabla u(x)$ is in the kernel of $A(x)$;
 but this latter condition (due to the very definition of $X_1,\ldots,X_n$) is equivalent to
 the fact that $X_1u(x)=\cdots=X_nu(x)=0$. Summing up,
\begin{equation}\label{annulareXconL}
 \Psi_A(u)\equiv 0\quad \text{if and only if}\quad X_1u,\ldots,X_nu\equiv 0.
\end{equation}
 Hence  (3) is equivalent to (2). Finally, the equivalence of (3) and (4)
 is a consequence of \eqref{LLcoordinataDIVE3}, taking into account \eqref{annulareXconL}.
\end{proof}
 The r\^ole of $\Psi_A(u)$ is clear from the following formula: if $u\in C^2(\Omega,\R)$ (for some open set $\Omega\subseteq\R^n$) and $F\in C^2(\R,\R)$ one has
 \begin{equation}\label{ruolPsi}
    \LL(F(u))=F'(u)\,\LL u+F''(u)\,\Psi_A(u).
 \end{equation}
 This formula has been exploited in \cite{kogoj_lanconelli_2014}, together with the 
 use of convex functions $F(u)$ of the global solution $u$ to $\LL u=0$, along with
 a representation formula of Poisson-Jensen type.

 The latter is recalled  in the next
 result, which is crucial for our purposes (see \cite[Theorem 2.3]{kogoj_lanconelli_2014} for the proof):
\begin{thmB}[Kogoj, Lanconelli, \cite{kogoj_lanconelli_2014}]
 Suppose that $\LL$ satisfies assumptions \emph{(ND)} and \emph{(HY)}.
 Then there exists a basis $\mathcal{B}$ for the Euclidean topology of $\RN$,
 whose elements are bounded open sets, with the following property:

 for every $\Omega\in \mathcal{B}$, and for every $x\in \overline{\Omega}$,
 there exist two Radon measures $\nu^\Omega_x$ on $\overline{\Omega}$
 and $\mu^\Omega_x$ on $\de{\Omega}$ such that,
 for any $v\in C^2(\overline{\Omega},\R)$, one has the representation formula
\begin{equation}\label{repre1}
    v(x)=\int_{\de{\Omega}} v(y)\,\d \mu_x^\Omega(y)-\int_{\overline{\Omega}} \LL v(y)\,\d \nu_x^\Omega(y),\quad \forall\,\,x\in \overline{\Omega}.
\end{equation}

 Moreover, if assumption \emph{(LI)} holds true,
 fixing a bounded open neighborhood $\Omega$ of $e$ (the neutral element of $\G$) as above,
 then we have
\begin{eqnarray}\label{representation}
 u(x)= \int_{\de\Omega} u(x \cdot y)\, \d\mu(y)  - \int_{\overline{\Omega}}
 (\elle u) (x \cdot y)\,\d\nu(y),
\end{eqnarray}
 for every $x\in\erren$ and every $u \in C^2(\erren, \erre)$. Here
 we have set, for brevity,
\begin{equation}\label{notazfix}
 \nu:=\nu^\Omega_e,\quad \mu:=\mu^\Omega_e.
\end{equation}
\end{thmB}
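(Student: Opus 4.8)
The plan is to build the two measures from the potential theory of the hypoelliptic operator $\LL$ and then to promote the formula to its left-invariant form by a covariance argument. First I would exploit assumptions (ND) and (HY): by Remark \ref{remproprietafurther}(b) the fields $X_0,\ldots,X_n$ satisfy H\"ormander's rank condition, so $\LL$ is a (possibly strongly degenerate) second order operator to which Bony's degenerate-elliptic potential theory applies. This endows $\LL$ with the structure of a Bauer/Brelot harmonic space; in particular, one obtains a basis $\mathcal{B}$ for the Euclidean topology of $\RN$, consisting of bounded $\LL$-regular open sets, on each of which the Dirichlet problem for continuous boundary data is uniquely solvable and the strong maximum principle holds.

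For a fixed $\Omega\in\mathcal{B}$ and $x\in\overline{\Omega}$ I would then introduce the two natural measures: the $\LL$-harmonic measure $\mu_x^\Omega$ on $\de\Omega$, characterized by $H_\varphi^\Omega(x)=\int_{\de\Omega}\varphi\,\d\mu_x^\Omega$ for the Perron--Wiener--Brelot solution $H_\varphi^\Omega$ of $\LL H=0$ with boundary datum $\varphi$, and the Green measure $\nu_x^\Omega$ on $\overline{\Omega}$, whose density is the Green function $G^\Omega(x,\cdot)$ and which inverts $\LL$ under homogeneous boundary conditions. Formula \eqref{repre1} is then a Green-type identity obtained by superposition: given $v\in C^2(\overline{\Omega},\R)$, let $h$ be the $\LL$-harmonic function in $\Omega$ with boundary values $v|_{\de\Omega}$, so that $h(x)=\int_{\de\Omega}v\,\d\mu_x^\Omega$; the difference $w:=v-h$ solves $\LL w=\LL v$ in $\Omega$ with $w=0$ on $\de\Omega$, whence $w(x)=-\int_{\overline{\Omega}}\LL v\,\d\nu_x^\Omega$ by the defining property of the Green measure. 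Adding the two identities yields \eqref{repre1}, the uniqueness of the decomposition $v=h+w$ being guaranteed by the maximum principle.

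It remains to derive the left-invariant form \eqref{representation}, and here I would use that (LI) makes the left-translation $\tau_x(y):=x\cdot y$ intertwine $\LL$ with itself, i.e.\ $\LL(u\circ\tau_x)=(\LL u)\circ\tau_x$. Consequently $\LL$-harmonicity and $\LL$-regularity are preserved under $\tau_x$, so if $\Omega$ is a regular neighborhood of $e$ then $x\cdot\Omega=\tau_x(\Omega)$ is a regular neighborhood of $x$, and both measures transform covariantly: $\mu_x^{x\cdot\Omega}=(\tau_x)_\ast\,\mu_e^\Omega$ and $\nu_x^{x\cdot\Omega}=(\tau_x)_\ast\,\nu_e^\Omega$. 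Applying \eqref{repre1} on the set $x\cdot\Omega$ at the base point $x=\tau_x(e)\in\overline{x\cdot\Omega}$ and then changing variables through $y\mapsto x\cdot y$ turns the integrals over $\de(x\cdot\Omega)$ and $\overline{x\cdot\Omega}$ into integrals over $\de\Omega$ and $\overline{\Omega}$ against the fixed measures $\mu=\mu_e^\Omega$ and $\nu=\nu_e^\Omega$ of \eqref{notazfix}, which is precisely \eqref{representation}.

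The main obstacle is the very first step: establishing the full harmonic-space apparatus --- a basis of regular sets, the harmonic measure, and above all the Green function --- for a merely hypoelliptic, possibly degenerate operator, where classical elliptic estimates are unavailable and one must genuinely invoke the potential theory of Bony built on the H\"ormander condition supplied by (ND) and (HY). Once that structure is in hand, the representation \eqref{repre1} and its left-invariant refinement \eqref{representation} follow routinely from the superposition principle, the maximum principle, and the covariance of $\LL$ under left translations.
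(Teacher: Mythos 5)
Note first that this paper does not prove Theorem B at all: it is imported verbatim from \cite[Theorem 2.3]{kogoj_lanconelli_2014}, so your proposal can only be measured against the argument of that reference and against how the present paper distributes its hypotheses. Your overall architecture --- harmonic measure $\mu^\Omega_x$ from the Perron--Wiener solution, Green measure $\nu^\Omega_x$ inverting $\LL$ under zero boundary data, the superposition $v=h+w$ plus the maximum principle to get \eqref{repre1}, then left invariance for \eqref{representation} --- is the natural potential-theoretic route and matches the spirit of the cited machinery (modulo routine technicalities you gloss over, e.g.\ $\LL v$ is merely continuous, so identifying $w$ with the Green potential needs a distributional-uniqueness step in which (HY) upgrades the difference to a smooth $\LL$-harmonic function before the maximum principle applies). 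For the second half, your covariance argument (translating the domain $\Omega$ to $x\cdot\Omega$ and pushing both measures forward) is correct but heavier than necessary: since $\LL(u\circ\tau_x)=(\LL u)\circ\tau_x$ for $\tau_x(y):=x\cdot y$, it suffices to apply \eqref{repre1} once, on the \emph{fixed} set $\Omega$ at the \emph{fixed} point $e$, to the function $v:=u\circ\tau_x\in C^2(\overline{\Omega},\R)$; since $v(e)=u(x)$, this gives \eqref{representation} immediately with $\mu,\nu$ as in \eqref{notazfix}, and one never needs to know that $x\cdot\Omega$ is regular.

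The genuine gap is your very first step. You claim that ``by Remark \ref{remproprietafurther}(b) the fields $X_0,\ldots,X_n$ satisfy H\"ormander's rank condition'' and then build everything on Bony's theory. But Theorem B assumes only (ND) and (HY), while Remark \ref{remproprietafurther}-(b) extracts the rank condition from the Ole\u{\i}nik--Radkevi\v{c} results \emph{together with the $C^\omega$ regularity of the coefficients}, which the paper obtains from (LI) in part (a) of that same remark. For merely $C^\infty$ coefficients, (ND)+(HY) do \emph{not} imply the rank condition: Fedi\u{\i}'s operator $\de_{x_1}^2+e^{-1/x_1^2}\,\de_{x_2}^2$ in $\R^2$ is hypoelliptic and satisfies (ND), yet its fields per \eqref{LLcoordinataDIVE2} are $X_1=\de_{x_1}$, $X_2=e^{-1/x_1^2}\,\de_{x_2}$, $X_0=0$, whose iterated brackets all produce multiples of $\de_{x_2}$ by derivatives of $e^{-1/x_1^2}$, so the generated Lie algebra has rank one on the line $\{x_1=0\}$. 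Hence Bony's theory is not available at the stated generality; this is exactly why Theorem B is formulated under (ND)+(HY) while Lemma A, by contrast, assumes the rank condition as an explicit hypothesis, and why in \cite{kogoj_lanconelli_2014} the basis of regular sets, the Picone-type maximum principle (from (ND)), and the needed convergence/compactness properties are derived directly from (ND)+(HY), with hypoellipticity substituting for the rank condition. Within the present paper your argument does cover every actual application, since Theorem B is only ever invoked jointly with (LI), where Remark \ref{remproprietafurther} makes the coefficients real-analytic and the rank condition available; but as a proof of Theorem B as stated, it does not close.
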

 In view of the central use of representation formula \eqref{representation}, we fix some notation.
\begin{definition}\label{defi.mean}
  For any $u\in C(\R^n,\R)$ and any $x\in \R^n$, we set
\begin{gather}\label{operatori}
\begin{split}
 M(u)(x) &:= \int_{\de\Omega} u(x \cdot y)\, \d \mu(y),\\
 N(u)(x) &:= \int_{\overline{\Omega}} u(x \cdot y)\, \d \nu(y).
\end{split}
\end{gather}
\end{definition}
 Hence \eqref{representation} can be written as follows
\begin{eqnarray}\label{compareprese}
 u(x)= M(u)(x) - N(\elle u)(x) \qquad \forall \ x\in\erren,\quad \forall\,\, u\in C^2(\R^n,\R).
\end{eqnarray}
 Distinguished properties of the operators $M,N$ are proved in \cite[Lemma 3.2]{kogoj_lanconelli_2014},
 which we here recall:
\begin{propC}[Kogoj, Lanconelli, \cite{kogoj_lanconelli_2014}]
 Let $u\in C(\erren, \erre)$ and let $M$ and $N$ be the operators in \eqref{operatori}.
\begin{itemize}
\item[{(i)}] If $u\geq 0$ then $M(u),N(u) \geq 0$;

\item[{(ii)}] $M(u),N(u)\in C(\erren,\erre)$.

\item[{(iii)}] If $N(u)\equiv 0$ (or $M(u)\equiv 0$) and  $u\geq 0$, then $u\equiv 0$.
\end{itemize}
\end{propC}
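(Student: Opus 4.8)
The plan is to treat the three claims in increasing order of difficulty, the genuine content lying in (iii). Throughout I use that $\mu$ and $\nu$ are \emph{nonnegative} (being Radon measures in Theorem B) and that $\partial\Omega\subseteq\overline\Omega$ are compact, since $\Omega$ is bounded; in particular $\mu(\partial\Omega),\nu(\overline\Omega)<\infty$.

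Claim (i) is immediate: if $u\geq 0$, then for each fixed $x$ the integrands $y\mapsto u(x\cdot y)$ in \eqref{operatori} are nonnegative, and integrating a nonnegative function against a nonnegative measure yields a nonnegative number, whence $M(u)(x),N(u)(x)\geq 0$. For claim (ii) I would combine the continuity of the group law with compactness. Fixing $x_0$ and a compact neighborhood $V$ of $x_0$, continuity of $(x,y)\mapsto x\cdot y$ makes $K:=V\cdot\overline\Omega$ compact, so $u$ is bounded and uniformly continuous on $K$; as $x\to x_0$ within $V$, the maps $y\mapsto x\cdot y$ converge to $y\mapsto x_0\cdot y$ uniformly on $\overline\Omega$ (uniform continuity of the product on the compact $V\times\overline\Omega$), and composing with the uniform continuity of $u$ on $K$ gives $u(x\cdot y)\to u(x_0\cdot y)$ uniformly in $y$. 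Since $\nu(\overline\Omega)<\infty$, I may pass to the limit under the integral (dominated convergence, with the constant bound $\sup_K|u|$), obtaining $N(u)(x)\to N(u)(x_0)$; the argument for $M(u)$ is identical, with $\partial\Omega$ replacing $\overline\Omega$.

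The core is claim (iii), and the point I must establish first is that $\nu$ (resp.\ $\mu$) is \emph{not the zero measure}. For $\mu$ this follows by inserting the constant $u\equiv 1$ into \eqref{compareprese}: since $\LL$ has no zero-order term, $\LL 1\equiv 0$, so \eqref{compareprese} gives $1=M(1)(x)=\mu(\partial\Omega)$, and $\mu$ is in fact a probability measure. For $\nu$ I would use \eqref{repre1} at the base point $x=e$ (recall $e\in\Omega$): choosing $v\in C^2(\overline\Omega,\R)$ with $v\equiv 0$ on $\partial\Omega$ and $v(e)\neq 0$ (a bump supported in $\Omega$), the boundary term drops and \eqref{repre1} reduces to $v(e)=-\int_{\overline\Omega}\LL v\,\d\nu$, forcing $\nu\not\equiv 0$. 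The main obstacle of the whole proof is precisely this nontriviality, as everything that follows is soft.

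Granted $\nu\not\equiv 0$, assume $N(u)\equiv 0$ with $u\geq 0$. For each fixed $x$ the function $y\mapsto u(x\cdot y)$ is continuous, nonnegative, and has vanishing $\nu$-integral, hence it vanishes on $\operatorname{supp}\nu$: a positive value would persist, by continuity, on a neighborhood of positive $\nu$-measure and produce a strictly positive integral. Since $\nu\not\equiv 0$, I pick $y_0\in\operatorname{supp}\nu$, so that $u(x\cdot y_0)=0$ for every $x\in\R^n$. Finally, the right translation $\rho_{y_0}\colon x\mapsto x\cdot y_0$ is a bijection of $\R^n$ (its inverse is $\rho_{y_0^{-1}}$), hence $\{x\cdot y_0:x\in\R^n\}=\R^n$ and therefore $u\equiv 0$. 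The case $M(u)\equiv 0$ is handled verbatim, using $\mu\not\equiv 0$ and $\operatorname{supp}\mu\subseteq\partial\Omega$.
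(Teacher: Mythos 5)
Your proof is correct, but note that this paper never proves Proposition~C at all: it is recalled verbatim from \cite[Lemma 3.2]{kogoj_lanconelli_2014}, so there is no in-paper argument to compare against, and your proposal should be judged as a free-standing reconstruction. As such it holds up. Parts (i) and (ii) are routine and you handle them properly (the only implicit hypothesis is that $\mu,\nu$ are \emph{nonnegative}, which is indeed the intended reading of ``Radon measures'' in Theorem~B --- without it (i) would be false). You correctly identify the genuine content of (iii) as the nontriviality of the two measures, and both of your devices are sound: the identity $\mu(\partial\Omega)=1$, obtained by inserting $u\equiv 1$ into \eqref{compareprese}, is exactly the observation the paper itself makes at the end of the proof of Lemma~\ref{elleunoelleuno}; and for $\nu$, your use of \eqref{repre1} at $x=e$ with a $C^\infty$ bump $v$ compactly supported in $\Omega$, $v(e)\neq 0$, is legitimate because Theorem~B takes $\Omega$ to be an open neighborhood of $e$, so the boundary term vanishes and $v(e)=-\int_{\overline{\Omega}}\LL v\,\d\nu$ forces $\nu\not\equiv 0$. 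The finish --- a continuous nonnegative function with zero integral vanishes on $\operatorname{supp}\nu$, pick $y_0\in\operatorname{supp}\nu$, and use that the right translation $\rho_{y_0}$ is a bijection of $\R^n$ to conclude $u\equiv 0$ --- is exactly the mechanism that makes the statement work on a group, and the $M$ case goes through verbatim. No gaps.
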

\section{Proof of the weighted $L^p$-Liouville theorems}\label{sec:prove}
 For the rest of the paper, we assume that $\LL$ satisfies assumption (ND), (HY) and (LI).
 A main tool in the proof of our $L^p$-Liouville theorems is the following Lemma \ref{elleunoelleuno}.
 It shows the r\^ole of the right-invariant measure $\RM$ with respect to the operator $M$.
 Lemma \ref{elleunoelleuno} and Corollary \ref{corollarytredue} are the versions, respectively,
 of \cite[Lemma 3.1]{kogoj_lanconelli_2014} and of \cite[Proposition 4.3]{kogoj_lanconelli_2014},
 where we drop the assumptions that $\G$ be unimodular and that $\RM$ be the Lebesgue measure.
\begin{lemma}\label{elleunoelleuno}
 Let $u\in C(\erren, \erre)$ be such that $u\in L^1(\erren, \RM)$, where $\RM$ is the right-invariant measure
 on $\G=(\R^n,\cdot)$ introduced in \eqref{traslaarhoRM}.

 Then $M(u)\in L^1(\erren, \RM)$ and
\begin{eqnarray}\label{uguagliosaMu}
 \int_\erren M(u)(x)\,\d\RM(x) = \int_\erren u(x)\,\d\RM(x).
\end{eqnarray}
\end{lemma}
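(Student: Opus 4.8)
The plan is to prove the identity
\begin{equation*}
 \int_{\erren} M(u)(x)\,\d\RM(x) = \int_{\erren} u(x)\,\d\RM(x)
\end{equation*}
by first unfolding the definition of $M$ from \eqref{operatori}, writing $M(u)(x)=\int_{\de\Omega}u(x\cdot y)\,\d\mu(y)$, and then applying the Fubini--Tonelli theorem to interchange the order of integration between $\d\RM(x)$ and $\d\mu(y)$. This gives
\begin{equation*}
 \int_{\erren} M(u)(x)\,\d\RM(x)
 = \int_{\de\Omega}\Big(\int_{\erren} u(x\cdot y)\,\d\RM(x)\Big)\,\d\mu(y).
\end{equation*}
The crucial point is then to evaluate the inner integral: for each \emph{fixed} $y\in\de\Omega$, the map $x\mapsto x\cdot y=\rho_y(x)$ is exactly the right translation by $y$, and since $\RM$ is \emph{right-invariant} by construction in \eqref{traslaarhoRM}, we have $\int_{\erren} u(x\cdot y)\,\d\RM(x)=\int_{\erren} u(x)\,\d\RM(x)$, independently of $y$. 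Plugging this back in and using that $\mu$ is a probability-normalized measure (i.e.\ $\mu(\de\Omega)=1$, which follows from the representation formula \eqref{repre1} applied to the constant function $v\equiv 1$, for which $\LL v=0$) yields the claimed equality.

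First I would justify the normalization $\mu(\de\Omega)=1$. Applying \eqref{repre1} with $v\equiv 1$ (so that $\LL v\equiv 0$) and $x=e$ gives $1=\int_{\de\Omega}1\,\d\mu(y)$, i.e.\ $\mu(\de\Omega)=1$; this is what makes the final substitution close up correctly. Next I would set up the right-invariance step carefully. The measure $\RM$ in \eqref{traslaarhoRM} is defined precisely so as to be right-invariant, meaning that for every $y\in\erren$ and every nonnegative measurable $f$ one has $\int_{\erren} f(\rho_y(x))\,\d\RM(x)=\int_{\erren} f(x)\,\d\RM(x)$. I would state this as the defining property of $\RM$ and invoke it with $f=u$; the change of variables is legitimate because $\rho_y$ is a diffeomorphism of $\erren$.

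The main technical obstacle will be the rigorous justification of the Fubini--Tonelli interchange, since $\mu$ is a Radon measure on the compact set $\de\Omega$ and $\RM$ is a (possibly non-Lebesgue, though absolutely continuous) measure on $\erren$, and the combined domain $\erren\times\de\Omega$ is not of finite total mass. To handle this cleanly I would first treat $|u|$ in place of $u$: by Tonelli (all integrands nonnegative) the interchange is unconditionally valid, and the inner integral $\int_{\erren}|u(x\cdot y)|\,\d\RM(x)=\int_{\erren}|u|\,\d\RM=\|u\|_{L^1(\RM)}$ is finite and independent of $y$ by right-invariance; integrating over $\de\Omega$ against $\mu$ then gives $\int_{\erren}\int_{\de\Omega}|u(x\cdot y)|\,\d\mu(y)\,\d\RM(x)=\|u\|_{L^1(\RM)}<\infty$. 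This simultaneously establishes that $M(u)\in L^1(\erren,\RM)$ (in fact $\|M(u)\|_{L^1(\RM)}\le\|u\|_{L^1(\RM)}$) and that the hypotheses of the Fubini theorem are met for $u$ itself, after which the computation above applies verbatim to complete the proof.
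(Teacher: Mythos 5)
Your proposal is correct and follows essentially the same route as the paper: Fubini--Tonelli to interchange $\d\RM(x)$ and $\d\mu(y)$, right-invariance of $\RM$ to evaluate the inner integral, and the normalization $\mu(\de\Omega)=1$ obtained from the representation formula with the constant function. The only difference is that you spell out, via Tonelli applied to $|u|$, the integrability step that the paper explicitly skips (``it follows by a similar argument''), which is exactly the intended justification and also yields the bound $\|M(u)\|_{L^1(\RM)}\le\|u\|_{L^1(\RM)}$.
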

\begin{proof}
  It is a consequence of Fubini Theorem. We skip the proof of the fact that $M(u)\in L^1(\erren, \RM)$, since it follows
 by a similar argument as the following one. We have:
\begin{align*}
 \int_\erren M(u)(x)\,\d \RM (x) &= \int_\erren \left( \int_{\de\Omega} u(x\cdot y)\,\d \mu(y)\right)\,\d \RM (x)\\
&= \int_{\de\Omega} \left( \int_\erren u(x\cdot y)\,\d \RM (x) \right)\,\d\mu(y)\\
 \mbox{ ($\RM $ is right invariant on $\gi$) } \quad &=
 \int_{\de\Omega} \left( \int_\erren u(x)\,\d \RM (x) \right)  \d\mu(y)\\
& =\left(\int_\erren u(x)\,\d \RM (x) \right) \left(\int_{\de\Omega} \d\mu(y)\right)\\
&= \int_\erren u(x)\,\d \RM (x) .
\end{align*}
 In the last equality we have used identity $\mu (\de\Omega)=1$, coming from
 \eqref{representation} with $u\equiv 1$.
\end{proof}
\begin{corollary} \label{corollarytredue}
 Let $u \in C^2(\erren, \erre)$ be an $\LL$-subharmonic function.

 If  $u\in L^1(\erren, \RM)$, then $u$ is actually $\LL$-harmonic on $\R^n$.
\end{corollary}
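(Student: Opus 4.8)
The plan is to combine the representation formula \eqref{compareprese} with the integral identity of Lemma \ref{elleunoelleuno}, exploiting the sign of $\LL u$. Since $u$ is $\LL$-subharmonic we have $\LL u\geq 0$ on $\erren$, and $\LL u$ is continuous because $u\in C^2(\erren,\erre)$; this continuity is what will allow us to feed $\LL u$ into Proposition C.

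First I would apply \eqref{compareprese} to rewrite the representation as $N(\LL u)(x)=M(u)(x)-u(x)$ for every $x\in\erren$. Because $u\in L^1(\erren,\RM)$, Lemma \ref{elleunoelleuno} guarantees that $M(u)\in L^1(\erren,\RM)$ and that $\int_\erren M(u)\,\d\RM=\int_\erren u\,\d\RM$. Hence the continuous function $N(\LL u)=M(u)-u$ is itself in $L^1(\erren,\RM)$, being a difference of two $L^1$-functions, and integrating the identity against $\RM$ yields
\begin{equation*}
 \int_\erren N(\LL u)(x)\,\d\RM(x) = \int_\erren M(u)(x)\,\d\RM(x) - \int_\erren u(x)\,\d\RM(x) = 0.
\end{equation*}

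Next I would invoke the sign information. Since $\LL u\geq 0$, part (i) of Proposition C gives $N(\LL u)\geq 0$ on $\erren$, while part (ii) gives $N(\LL u)\in C(\erren,\erre)$. A nonnegative continuous function whose integral against $\RM$ vanishes must be identically zero, because the density of $\RM$ in \eqref{traslaarhoRM} is strictly positive, so $\RM$ and the Lebesgue measure are mutually absolutely continuous; thus $N(\LL u)\equiv 0$ on $\erren$. Finally, applying part (iii) of Proposition C to the nonnegative continuous function $\LL u$, the vanishing $N(\LL u)\equiv 0$ forces $\LL u\equiv 0$, that is, $u$ is $\LL$-harmonic on $\R^n$.

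The argument is short, and the only point requiring care is the integrability bookkeeping: the subtraction of the two integrals is legitimate precisely because $u\in L^1(\erren,\RM)$ forces $M(u)\in L^1(\erren,\RM)$ through Lemma \ref{elleunoelleuno}, and hence $N(\LL u)\in L^1(\erren,\RM)$. I do not expect a serious obstacle beyond this; the structural input is the interplay between the right-invariance of $\RM$ (which makes $M$ integral-preserving) and the nonnegativity of $N(\LL u)$ coming from $\LL$-subharmonicity.
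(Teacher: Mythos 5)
Your proposal is correct and follows essentially the same route as the paper's own proof: the identity $N(\LL u)=M(u)-u$ from \eqref{compareprese}, the integral preservation of $M$ from Lemma \ref{elleunoelleuno} forcing $\int_\erren N(\LL u)\,\d\RM=0$, and then Proposition C (nonnegativity, continuity, and part (iii)) to conclude $\LL u\equiv 0$. The only cosmetic difference is that you fold the paper's two-step passage ($\RM$-a.e.\ vanishing, then Lebesgue-a.e.\ vanishing via the positive density, then continuity) into a single observation that a nonnegative continuous function with zero $\RM$-integral vanishes identically, which is the same argument.
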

\begin{proof}
 From \eqref{compareprese} we have
 $N(\LL u)= M(u)-u$ on $\R^n$.
 By Lemma  \ref{elleunoelleuno}, $u\in L^1(\erren, \RM)$ implies $M(u)\in L^1(\erren, \RM)$, whence
  $N(\LL u)\in L^1(\erren, \RM)$ too. From \eqref{uguagliosaMu} we also get
\begin{eqnarray*}
  \int_\erren N (\elle u)\, \d\RM=\int_\erren M(u)\, \d\RM-\int_\erren u\, \d\RM=0.
\end{eqnarray*}
 On the other hand, since $\elle u \geq 0$, we have
 $N(\elle u) \geq 0$ in $\erren$ (see
 Proposition C-(i)). Therefore
 $N(\elle u)= 0$ $\RM$-almost-everywhere in $\erren$.
 Since  $\RM$ is equal to a (smooth) positive  density times the Lebesgue measure
 on $\R^n$ (see \eqref{traslaarhoRM}), we infer that
\begin{equation}\label{quasidappLeb}
 \text{$N(\elle u)= 0$ \quad Lebesgue-almost-everywhere in $\erren$.}
\end{equation}
 From $u\in C^2$, we get $\elle u\in C$ so that, by Proposition C-(ii),
 $N(\elle u)$ is continuous.
 As a consequence of \eqref{quasidappLeb} it follows $N(\elle u)\equiv 0$.
 Finally, the $\LL$-subharmonicity of $u$ and an application of Proposition C-{(iii)}
 shows that $u$ is $\LL$-harmonic in $\erren$.
\end{proof}

 Now, we are in the position to prove Theorems \ref{primo} and \ref{secondo} proceeding
 along the lines of \cite{kogoj_lanconelli_2014}.
 First we need a result from Lie-group theory:
 this comes from the characterization of compact groups in terms of the finiteness of the Haar measure
 (see e.g., \cite[Proposition 1.4.5]{DeitmarEchterhoff}). We give the (very short) details for completeness.
\begin{lemma}\label{summab.HHHH}
 The only constant function belonging to $L^1(\R^n,\RM)$ is the null function.
\end{lemma}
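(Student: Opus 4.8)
The plan is to relate the finiteness of $\RM$ on $\R^n$ to the compactness of the Lie group $\G$. The key observation is that a nonzero constant $c$ lies in $L^1(\R^n,\RM)$ if and only if $\RM(\R^n)<\infty$, since $\int_{\R^n}|c|\,\d\RM=|c|\,\RM(\R^n)$. Thus the statement is equivalent to showing that $\RM(\R^n)=\infty$, i.e. that the right-invariant measure of the whole group is infinite.

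First I would invoke the standard structural fact cited in the excerpt: the finiteness of a Haar measure on a locally compact group characterizes compactness (see \cite[Proposition 1.4.5]{DeitmarEchterhoff}). More precisely, a Haar measure on $\G$ is finite if and only if $\G$ is compact. Since $\RM$ is a (right-invariant) Haar measure on $\G$, we have $\RM(\R^n)<\infty$ precisely when $\G$ is compact as a topological group. The final step is then to observe that the underlying manifold of $\G$ is $\R^n$ (by assumption (LI)), and $\R^n$ with its Euclidean topology is \emph{not} compact; hence $\G$ cannot be compact, and therefore $\RM(\R^n)=\infty$. This forces any nonzero constant to have infinite $L^1(\R^n,\RM)$-norm, so the only constant in $L^1(\R^n,\RM)$ is the null function.

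I expect the only delicate point to be the identification of $\RM$ as a genuine (right) Haar measure to which the cited compactness criterion applies. This is already guaranteed by the construction in \eqref{traslaarhoRM}, where $\RM$ is shown to be a right-invariant Radon measure on $\G$ that is positive on nonempty open sets (being a smooth positive density times Lebesgue measure). Given this, the compactness criterion is directly applicable, and no genuine obstacle remains; the argument is essentially a one-line deduction from the Lie-theoretic characterization of compact groups together with the non-compactness of $\R^n$.
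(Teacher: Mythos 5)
Your proposal is correct and takes essentially the same approach as the paper: the paper explicitly introduces its proof as the ``very short details'' of exactly the compactness criterion you cite (a Haar measure is finite if and only if the group is compact, \cite[Proposition 1.4.5]{DeitmarEchterhoff}), combined with the non-compactness of the underlying space $\R^n$. The only difference is that the paper inlines the standard proof of that criterion---choosing a compact neighborhood $U$ of $e$ and a maximal finite family of disjoint right-translates $U\cdot x_1,\ldots,U\cdot x_k$, whose union $K$ satisfies $\R^n=K^{-1}\cdot K$, contradicting compactness---whereas you invoke the criterion as a black box, which is equally legitimate.
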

\begin{proof}
 We argue by contradiction: we assume the existence of a non-vanishing constant function in
 $L^1(\R^n,\RM)$, which is equivalent to requiring that $\RM(\R^n)<\infty$. If this happens, we can find
 a compact neighborhood $U$ of the neutral element $e$ of $\G$, and at most a finite family
 of mutually disjoint sets
 $$U\cdot x_1,\ldots, U\cdot x_k,\quad \text{with $k$  maximal}.$$
 Here we have used the right invariance of $\RM$, ensuring that
 $\RM(U\cdot x_i)=\RM(U)>0$, for any $i=1,\ldots,k$.
 We set $K:=\bigcup_{i=1}^k U\cdot x_i$, which is clearly a compact set in $\R^n$.

 From the maximality of $k$, it is simple to recognize that, for any $x\in \R^n$, one has $K\cap (K\cdot x)\neq \emptyset$.
 This shows that $\R^n=K^{-1}\cdot K$, which is absurd since the latter is a compact set.
 Hence $\RM(\R^n)=\infty$.
\end{proof}
 We are ready to give the proofs of our main results.\bigskip

\begin{proof}\emph{(of Theorem \ref{primo}.)}
Let $u$ be a (smooth) solution to $\elle u = 0$  in $\erren$.\medskip

 (i)\,\,Assume  $u\in L^p(\erren, \RM)$ (for some $1\le p <\infty$) and consider
 $v:= F(u)$, where $$F:\erre\longrightarrow\erre,\quad  F(t)=(\sqrt{1+t^2} - 1)^p .$$
 It is easy to check that
\begin{itemize}
  \item[-] $F\in C^2(\R,\R)$;

  \item[-] $0\leq F(t)\leq |t|^p$ for every $t\in\R$;

  \item[-] $F''(t)>0$ for every $t\neq 0$.
\end{itemize}
 Then  $v \in C^2(\erre,\erre)$, $v\in L^1(\erren, \RM)$ (since $u\in L^p(\erren, \RM)$ and $|F(u)|\leq |u|^p$) and
\begin{equation*}
 \begin{split}
 \elle v  & \stackrel{\eqref{ruolPsi}}{=} F' (u)\,\elle u + F''(u)\,\Psi_A(u)=F''(u)\,\Psi_A(u)\geq 0.
\end{split}
\end{equation*}
 Therefore, by Corollary \ref{corollarytredue}, $\elle v =0$ so that, since
 $F''(u) >0$ if $u\neq 0$,
\begin{equation}\label{eqduecinque}
 \Psi_A(u)=0\inn \Omega_0:=\{x\in\erren\ |\ u(x)\neq 0\}.\end{equation}
 If $\Omega_0 = \emptyset $ we are done, since we aim to prove that $u\equiv 0$.
 Assume, by contradiction, that $\Omega_0 \neq \emptyset$.  Keeping in mind that
 $\elle u = 0$ in $\erren$ by hypothesis and that $\Psi_A(u)=0$ on $\Omega_0$ by construction,
 from Lemma A-(4) we get
 that $u$ is constant on every non-empty connected component $O$ of  $\Omega_0$.

 If $\partial O\neq \emptyset$, since $\de O\subseteq \de \Omega_0$ (and clearly $u=0$ on $\de\Omega_0$),
 then $u\equiv 0$ in $O$, in contradiction with the very definition of  $\Omega_0.$
 Thus,  $\partial O= \emptyset$, i.e.,  $\Omega_0=\erren$ and $u$ is constant on $\R^n$.
 Now, the assumption $u\in L^p(\erren, \RM)$ jointly with Lemma \ref{summab.HHHH}, shows that $u\equiv 0$,
 in contradiction with $\Omega_0\neq \emptyset$. This ends the proof of Theorem \ref{primo} under assumption (i).\medskip

 (ii)\,\,Assume  $u\geq 0$ and  $u^p\in L^1(\erren, \RM)$ for some $p\in\, ]0,1[.$ Define $v:= F(u)$, with
 $$F: [0, \infty[ \longrightarrow\erre,\quad  F(t)=(1 + t)^p - 1.$$
 $F$ has the following properties:
\begin{itemize}
  \item[-] $F\in C^\infty([0,\infty),\R)$;

  \item[-] $0\leq F(t)\leq t^p$ for every $t\geq 0$;

  \item[-] $F''(t)<0$ for every $t\geq 0$.
\end{itemize}
 Therefore $v \in C^\infty(\erren,\erre)$, $v\in L^1(\erren,\RM)$ and
\begin{equation*}
\begin{split}
 \elle v  & \stackrel{\eqref{ruolPsi}}{=}
 F' (u)\,\elle u + F''(u)\,\Psi_A(u)=  F''(u)\,\Psi_A(u)\le 0 .
\end{split}
\end{equation*}
 Thus, by Corollary \ref{corollarytredue} applied to $-v$, we infer that $\elle v=0$  in $\erren$ so that
 the above identity yields $0=F''(u)\,\Psi_A(u)$. As a consequence,
 since $F''(u)<0$ (recall that $u\geq 0$ by assumption), we get
\begin{equation*}
 \Psi_A(u)\equiv 0\inn \erren.
\end{equation*}
 Since $\LL u=0$ in $\R^n$ by hypothesis,
 a direct application of Lemma A-(4) proves that $u$ is constant
 in $\R^n$. Since $u^p$ belongs to $L^1(\erren, \RM)$,  we are entitled to apply
 Lemma \ref{summab.HHHH} and infer that $u\equiv 0$ in $\R^n$, and this ends the proof.
\end{proof}

 We end the section with the proof of our weighted $L^p$-Liouville Theorem
 for the $\LL$-subharmonic functions.\medskip

\begin{proof}\emph{(of Theorem \ref{secondo}.)}
 Let $u\in C^2(\erren,\erre)$ be $\elle$-subarmonic and let it belong to $L^p(\erren, \RM)$
 (for some $p\in [1,\infty)$). We aim to prove that
 $$\Omega_+:=\{ x\in\erren\ | \  u(x)>0\}=\emptyset.$$
 We argue
 by contradiction and assume that $\Omega_+\neq\emptyset$.
 Let us consider the function
 $$F:\erre\ttende\erre,\quad
 F(t) := \left\{
           \begin{array}{ll}
             0 & \hbox{if $t\leq 0$,} \\
             \big(\sqrt[4]{{1 + t^4}} - 1\big)^{p} & \hbox{if $t>0$.}
           \end{array}
         \right.$$
 It is easy to recognize that:
\begin{itemize}
\item[(i)] $F\in C^2(\R,\R)$, $F$ is increasing and convex;
\item[(ii)] $F'>0$ and $F''>0$ in $]0,\infty[$;
\item[(iii)] $0\le F(t)\le t^p$ for every $t\geq 0$.
 \end{itemize}
 We define $v:=F(u)$ on $\R^n$. Then $v\in C^2(\erre,\erre)$ and, by property (iii) above,
 $v\in L^1(\erren, \RM)$.
 Moreover, by identity \eqref{ruolPsi},
 $$\elle v= F'(u)\, \elle u + F''(u)\,\Psi_A(u)\geq 0,$$
 since $\elle u \geq 0$ and $F',F''\geq 0$  by (i).
 Summing up, $v$ is $\LL$-subharmonic in space and in $L^1(\erren, \RM)$:
 Corollary \ref{corollarytredue} then implies that $\elle v\equiv 0$, whence
\begin{equation*}
 F'(u) \,\elle u + F''(u)\,\Psi_A(u)= 0\quad \text{in $\R^n$}.
\end{equation*}
 Taking into account property (ii) of $F$, we obtain
\begin{equation*}
 \elle u =0\quad\text{and}\quad \Psi_A(u)= 0 \quad \text{in $\Omega_+$}.
\end{equation*}
  We are therefore entitled to apply Lemma A-(4) on every connected component
 $O$ of $\Omega_+$, and derive that $u$ is constant on $O$.

 If $\partial O\neq \emptyset$, since $\de O\subseteq \de \Omega_+$ (and clearly $u=0$ on $\de\Omega_+$),
 then $u\equiv 0$ in $O$, in contradiction with the definition of  $\Omega_+$.
 Thus,  $\partial O= \emptyset$, so that $\Omega_+=\erren$ and $u$ is constant on $\R^n$.
 As in the proof of Theorem \ref{primo}, by invoking Lemma \ref{summab.HHHH} we get that $u\equiv 0$,
 in contradiction with $\Omega_+\neq \emptyset$.\medskip
\end{proof}

\section{Examples and an application}\label{sec:examples}
 We now give new examples of PDOs to which the $L^p$-Liouville theorems
 apply.
\subsection{Matrix-exponential groups}\label{sec:MEXPgroups}
 We denote the points of $\R^{1+n}$ by $(t,x)$, with $t\in \R$ and $x\in \R^n$.
 Let $B$ be a real square matrix of order $n$; following
 \cite[Section 2]{bonfiglioli_lanconelli_2012}, we say that
 the \emph{ma\-trix-ex\-po\-nen\-tial group} $\G(B)$ related to $B$ is $(\R^{1+n},\cdot)$
 endowed with the product
\begin{equation*}
 (t,x)\cdot (t',x')= \big(t+t',x+\exp(t\,B)\,x'\big),\qquad
 t,t'\in\R,\,\,x,x'\in\R^n.
\end{equation*}
 A basis for the Lie algebra of $\G(B)$, say $\mathfrak{g}(B)$,
 is $\{\de_t,X_1,\ldots,X_n\}$,
 where
 $$\textstyle X_j:= \sum_{k=1}^n  a_{k,j}(t)\,\partial_{x_k}\quad (j= 1,\dots, n),$$
 where $a_{k,j}(t)$ is the entry of position $(k,j)$ of the matrix
 $\exp(t\,B)$.  The neutral element of $\G(B)$ is $(0,0)$.
 The right-invariant measure $\RM$ in \eqref{traslaarhoRM} is equal
 to the Lebesgue measure in $\R^{1+n}$, since (in block form) we
 have
 $$\mathcal{J}_{\displaystyle \rho_{(t',x')}}((0,0))=
 \left(
   \begin{array}{cc}
     1 & 0 \\
     Bx' & \mathbb{I}_n \\
   \end{array}
 \right)\qquad (\text{$\mathbb{I}_n$ is the identity matrix of order
 $n$}).$$
 On the other hand, since any Haar measure on $\G(B)$ is a (positive) scalar multiple of
 $$\frac{1}{\det\mathcal{J}_{\displaystyle \tau_{(t,x)}}((0,0))}\,\d t\d x $$
 (here $\tau_{(t,x)}$ denotes the left translation by $(t,x)$),
 and since
 \begin{equation}\label{haarGB}
    \mathcal{J}_{\displaystyle \tau_{(t,x)}}((0,0))=
     \left(
   \begin{array}{cc}
     1 & 0 \\
     0 & \exp(tB) \\
   \end{array}
 \right),
 \end{equation}
 then $\G(B)$ is unimodular (with $\d t\d x$ as left/right-invariant measure)
 if and only if $\mathrm{trace}(B)=0$.
 Thus, our results here are contained in \cite{kogoj_lanconelli_2014} \emph{only when
 $\mathrm{trace}(B)=0$}.\medskip

 More precisely, the $L^p$-Liouville Theorems \ref{primo} and
 \ref{secondo} hold true \emph{for any matrix $B$} (with $L^p$ standing for the usual $L^p(\R^{1+n})$ space),
 and for any second order operator $\LL$ which is a polynomial of degree $2$ in
  $\de_t,X_1,\ldots,X_n$ (hence it is left invariant), provided that $\LL$ also fulfils
  our structure hypotheses (ND) and (HY). For example, $\LL$ may be of the form
  $$X_1^2+\cdots+X_n^2+(\de_t)^2 $$
  (a sum of square of H\"ormander vector fields), or of the forms
  $$X_1^2+\cdots+X_n^2-\de_t,\qquad X_1^2+\cdots+X_n^2+\de_t $$
   (evolution H\"ormander operators, with drift terms $\pm\de_t$).
   These operators are non-de\-ge\-ne\-ra\-te (recall that the vector fields $X_1,\ldots,X_n$
   are associated with the columns of the non-null matrix
   $\exp(tB)$), and they are hypoelliptic, due to H\"ormander
   hypoellipticity condition (since $\{\de_t,X_1,\ldots,X_n\}$ is a basis of
   $\mathfrak{g}(B)$).

 More degenerate operators are allowed, as in the next example.
\begin{example}\label{exa.companion}
  Following \cite[Section 3]{bonfiglioli_lanconelli_2012}, if $B$ takes on the special
  ``companion'' form
  $$
 B=\begin{bmatrix}
 0     &  0   & \cdots & 0    & -a_0 \\
 1     &  0   & \cdots & 0    & -a_1 \\
 0     &  1   & \cdots & 0    & -a_2 \\
 \vdots&\vdots& \ddots &\vdots& \vdots \\
 0     &  0   & \cdots & 1    & -a_{n-1}
 \end{bmatrix}
 $$
 (for some assigned real numbers $a_0,a_1,\ldots,a_{n-1}$), then
 $$\exp(tB)=
\begin{bmatrix}
 u_1(t)    &u'_1(t)     &\cdots & u_{1}^{(n-1)}(t) \\
 \vdots & \vdots  &\vdots &\vdots \\
 u_n(t) & u'_n(t) &\cdots & u_n^{(n-1)}(t)
\end{bmatrix},
$$
 where $\{u_1(t),\ldots,u_n(t)\}$ is the fundamental system of solutions of the $n$-th order
  constant-coefficient ODE
\begin{equation*}
 u^{(n)}(t)+a_{n-1}\,u^{(n-1)}(t)+\cdots +a_{1}\,u'(t)+a_0\,u(t)=0.
\end{equation*}
 Following our previous notation for the basis $\{\de_t,X_1,\ldots,X_n\}$ of $\mathfrak{g}(B)$, we have
 $$X_j=u^{(j-1)}_1(t)\,\de_{x_1}+\cdots+u^{(j-1)}_n(t)\,\de_{x_n}\quad (j=1,\ldots,n). $$
 This shows that
 it is sufficient to consider the two vector fields
 $$\de_t\quad\text{and}\quad X_1=u_1(t)\,\de_{x_1}+\cdots+u_n(t)\,\de_{x_n}$$
 to Lie-generate the whole of $\mathfrak{g}(B)$. Therefore, the five operators
  $$(\de_t)^2+(X_1)^2,\quad (\de_t)^2\pm X_1,\quad (X_1)^2\pm\de_t $$
  are H\"ormander operators (hence hypoelliptic) to which our $L^p$-Liouville results apply
  (with measure $\RM$ equal to the Lebesgue measure in $\R^{1+n}$).
  When $a_{n-1}\neq 0$, the associated matrix-exponential group $\G(B)$ is \emph{not} unimodular, so that these operators are not comprised in \cite{kogoj_lanconelli_2014}.
\end{example}
\subsection{The inverse group of $\G(B)$}\label{sec:MEXPgroupsINV}
 We have the following examples:\medskip

 (a)\,\,Let $\G(B)$ be the group constructed in Section \ref{sec:MEXPgroups}.
 Following \cite[Section 4]{bonfiglioli_lanconelli_2012}, we can interchange
 right and left multiplications of $\G(B)$, obtaining the group
 $\widehat{\G}(B):=(\R^{1+n},\widehat{\cdot}\,)$ (referred to as the \emph{inverse group of $(\G(B),\cdot)$}),
  where
\begin{equation}\label{p.compo.BBIN}
 (t,x)\,\widehat{\cdot}\, (t',x')= \big(t+t',x'+\exp(t'\,B)\,x\big),\qquad
 t,t'\in\R,\,\,x,x'\in\R^n.
\end{equation}
 Hence, a basis for the Lie algebra of $\widehat{\G}(B)$ is
 $\{T,\de_{x_1},\ldots, \de_{x_n}\}$, where
 $$T:=\de_t+\sum_{i,j=1}^n b_{i,j}\,x_j\,\de_{x_i}.$$
 The neutral element of $\widehat{\G}(B)$ is $(0,0)$ and
 the associated right-invariant measure $\RM$ in \eqref{traslaarhoRM} is equal
 to
\begin{equation}\label{p.compo.BBRMM}
 \d\RM(t,x)=e^{-t\,\textrm{trace}(B)}\,\d t\d x,
\end{equation}
 and it is easy to recognize that
 $\widehat{\G}(B)$ is unimodular (with $\d t\d x$ as left/right-invariant measure)
 if and only if $\mathrm{trace}(B)=0$.

 As a consequence, the $L^p$-Liouville Theorems \ref{primo} and
 \ref{secondo} hold true for any matrix $B$, with $L^p$ standing for $L^p(\R^{1+n},\RM)$
 with $\RM$ as in \eqref{p.compo.BBRMM}, and
 for any second order operator $\LL$ which is a polynomial of degree $2$ in
 $T,\de_{x_1},\ldots,\de_{x_n}$, provided that $\LL$ also fulfils
 our structure hypotheses (ND) and (HY). For example, if we use the compact notations
 $\Delta_x:=\sum_{j=1}^n (\de_{x_j})^2$ and $T=\de_t+\langle Bx,\nabla_x \rangle$, $\LL$ may be of the form
  $$\LL_1=\Delta_x+\Big(\de_t+\langle Bx,\nabla_x \rangle\Big)^2 $$
  (a sum of square of H\"ormander vector fields), or of the form (replacing $B$ with $-B$)
  $$\LL_2=\Delta_x+\langle Bx,\nabla_x \rangle -\de_t$$
   (an evolution H\"ormander operator, with drift term $\langle Bx,\nabla_x \rangle -\de_t$), which is
   a left-invariant evolution PDO of
   Kolmogorov-Fokker-Planck type.\medskip

 (b)\,\,Many other examples inspired by the previous case are available of more de\-ge\-ne\-ra\-te operators to which our results apply:
 for example,
 when
 $$ B=\left(
        \begin{array}{cc}
          0 & 0 \\
          1 & 0 \\
        \end{array}
      \right),$$
 we have $T=\de_t+x_1\,\de_{x_2}$ and it is sufficient to consider $\de_{x_1}$
 to obtain the H\"ormander system $\{T,\de_{x_1}\}$ in $\R^3=\R_t\times \R^2_x$. As a consequence, our
 Theorems \ref{primo} and \ref{secondo} (with $\RM$ equal to the Lebesgue measure on $\R^3$) apply to the two H\"ormander operators
 $$\LL_1=(\de_{x_1})^2+(\de_t+x_1\,\de_{x_2})^2,\quad \LL_2=(\de_{x_1})^2-\de_t-x_1\,\de_{x_2}.$$
 The associated group law \eqref{p.compo.BBIN} is
\begin{equation*}
 (t,x_1,x_2)\,\widehat{\cdot}\, (t',x_1',x_2')= \Big(t+t',x_1+x'_1,x_2+x'_2+x_1t'\big),
\end{equation*}
 which defines the so-called polarized Heisenberg group.
 We remark that $\LL_2$ is a de\-ge\-ne\-ra\-te ultraparabolic operator of Kolmogorov-Fokker-Planck type.

 More generally, one can consider
 a matrix of the form
 $$ B=\left(
        \begin{array}{cc}
          0 & 0 \\
          \mathbb{I}_n & 0 \\
        \end{array}
      \right) \quad \text{($\mathbb{I}_n$ is the $n\times n$ identity matrix)},$$
 and the associated $\widehat{\mathbb{G}}(B)$ group. In this case the measure
 $\RM$ is the Lebesgue measure on $\R^{2n+1}$ and a meaningful operator to which
 our results apply is
 $$\mathcal{K}=\sum_{j=1}^n (\de_{x_j})^2+\sum_{j=1}^{n} x_{j}\,\de_{x_{n+j}}-\de_t,$$
 the classical Kolmogorov-Fokker-Planck operator (see example (i) in the Introduction). \medskip

 (c)\,\,Yet another example is given by the matrix
 $$ B=\left(
        \begin{array}{cc}
          1 & 1 \\
          -1 & 0 \\
        \end{array}
      \right),$$
 so that $T=\de_t+(x_1+x_2)\de_{x_1}-x_1\de_{x_2}$ and the associated right-invariant measure is
 $$\d\RM(t,x_1,x_2)= e^{-t}\,\d t\d x_1\d x_2.$$
 An operator fulfilling our hypotheses (ND), (HY), (LI) is therefore
 $$\LL=\Big(\tfrac{1}{\sqrt2}\,\de_{x_1}\Big)^2-T=\tfrac{1}{2}(\de_{x_1})^2
 -(x_1+x_2)\de_{x_1}+x_1\de_{x_2}-\de_t,$$
 considered in example (ii) in the Introduction.\medskip

 (d)\,\,In general, if the operator $\LL$ satisfies our structure conditions
  (ND), (HY), (LI) (the latter w.r.t.\,the group $\G=(\R^n,\cdot)$), we can
  add an extra variable $t\in \R$ thus obtaining a
  new evolution operator
 $$\mathcal{H}:=\LL-\de_t\quad \text{ on $\R^{n+1}=\R^{n}_x\times \R_t$,}$$
  to which Theorems
 \ref{primo} and \ref{secondo} can be applied: it suffices to consider the Lie group obtained as a direct product
 of ${\G}$ with the group $(\R_t,+)$, and by taking into account the right-invariant product measure $$\d \RM(x)\,\d t.$$
\subsection{An application to the uniqueness of the Cauchy problem}\label{sec:Cauchyuniq}
 Suppose that the operator $\LL$ in $\R^{1+n}$ (whose points are denoted by
 $(t,x)$, with $t\in \R$ and $x\in \R^n$) satisfies our structure assumptions
 (ND), (HY), (LI); assume furthermore that $\LL$ has the following ``Heat-type'' form:
\begin{equation}\label{formaLLLLCauchy}
 \elle = L-\de_{t},\quad \text{where}\quad
 L=\sum_{i,j=1}^{n} a_{i,j}(x)\,\frac{\de^2}{\de{x_i}\de{x_j}}  + \sum _{j=1}^{n}  b_j(x)\frac{\de}{\de{x_j}}.
\end{equation}
 Then we prove the following uniqueness result:
\begin{proposition}\label{pop:cauchyU}
 Let us denote by $\Omega $ the half-space $\{(t,x)\in \R^{1+n}:t>0\}$.
 Under the above assumptions and notation on $\LL$, any classical solution $u\in C^\infty(\Omega )\cap C(\overline{\Omega })$
  to the Cauchy problem
  \begin{equation}\label{CAUUUeq}
    \left\{
      \begin{array}{ll}
        \LL u=0 &\hbox{in $\Omega $} \\
        u(t,x)=0&\hbox{for $t=0$}
      \end{array}
    \right.
  \end{equation}
  is identically zero on $\Omega $ if it holds that $u\in L^p(\Omega,\RM)$ for some $p\in [1,\infty)$.
 As usual, $\RM$ denotes the right-invariant measure \eqref{traslaarhoRM} on the group $\G$ for which
  hypothesis \emph{(LI)} holds.
\end{proposition}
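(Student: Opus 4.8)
The plan is to reduce the uniqueness statement for the Cauchy problem on the half-space $\Omega=\{t>0\}$ to the global $L^p$-Liouville Theorem~\ref{secondo} (or~\ref{primo}), by reflecting the solution across the initial hyperplane $\{t=0\}$ and producing a global solution on all of $\R^{1+n}$ to which the Liouville machinery applies. The key point enabling this is the special ``Heat-type'' structure $\LL=L-\de_t$ in~\eqref{formaLLLLCauchy}, in which the only $t$-derivative is the first-order drift $-\de_t$; this makes a suitable odd (or even) reflection in $t$ behave well with respect to $\LL$.

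\emph{First I would} define the reflected function. Given the solution $u\in C^\infty(\Omega)\cap C(\overline\Omega)$ with $u(0,x)=0$, set
\begin{equation*}
 \widetilde u(t,x):=\begin{cases} u(t,x) & t\ge 0,\\ -u(-t,x) & t<0. \end{cases}
\end{equation*}
Because $u=0$ on $\{t=0\}$, the odd reflection $\widetilde u$ is continuous across the hyperplane. \emph{Next I would} check that $\widetilde u$ is a (weak, hence by hypoellipticity (HY) classical) solution of $\LL\widetilde u=0$ on all of $\R^{1+n}$: on $\{t<0\}$ one computes $\LL\widetilde u(t,x)=-L u(-t,x)-\de_t\bigl(-u(-t,x)\bigr)=-(Lu-\de_su)(-t,x)=-(\LL u)(-t,x)=0$, using that $L$ acts only in $x$ and that $\de_t[u(-t,x)]=-(\de_s u)(-t,x)$, so the $-\de_t$ drift is exactly compensated by the sign flip. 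Thus $\LL\widetilde u=0$ on $\{t>0\}$ and on $\{t<0\}$, and the continuity across $\{t=0\}$ together with (HY) promotes $\widetilde u$ to a global $C^\infty$ $\LL$-harmonic function on $\R^{1+n}$. Finally I would verify the integrability: since $|\widetilde u(t,x)|=|u(|t|,x)|$ and $\RM$ is (in these Heat-type cases) of the form $w(t)\,\d t\,\d x$, the hypothesis $u\in L^p(\Omega,\RM)$ gives $\widetilde u\in L^p(\R^{1+n},\RM)$, after which Theorem~\ref{primo}(i) forces $\widetilde u\equiv 0$, hence $u\equiv 0$ on $\Omega$.

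\emph{The hard part will be} justifying that the odd reflection really yields a \emph{weak} solution across the singular hyperplane $\{t=0\}$, so that hypoellipticity can be invoked. The naive pointwise computation only shows $\LL\widetilde u=0$ away from $\{t=0\}$; to conclude $\widetilde u\in\mathcal D'(\R^{1+n})$ solves $\LL\widetilde u=0$ one must show that no distributional mass (a single- or double-layer term) is created on $\{t=0\}$. This is where the initial condition $u(0,x)=0$ and the odd symmetry are essential: integrating $\LL\widetilde u$ against a test function and applying the divergence theorem on the two half-spaces, the boundary contributions from $-\de_t$ on the two sides cancel precisely because $\widetilde u$ is continuous (value $0$) across $\{t=0\}$, and the contributions from the second-order part $L$ (which differentiates only in $x$) produce no transversal boundary term at all. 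A further delicate point is the exact form of $\RM$ on $\Omega$ versus its reflection — one must ensure the weight $e^{t\,\mathrm{trace}(B)}$-type density remains compatible with the reflection so that $L^p(\Omega,\RM)$-summability transfers to global $L^p(\R^{1+n},\RM)$-summability; in the worst case where the density is not even in $t$, I would instead work directly with the one-sided integrability and the global Liouville argument restricted to the positivity set, but the cleanest route is the reflection above.
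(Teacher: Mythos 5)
There is a genuine error at the heart of your construction: the odd reflection is \emph{not} $\LL$-harmonic on $\{t<0\}$, because a heat-type operator is not invariant under time reversal. With $\widetilde u(t,x)=-u(-t,x)$ for $t<0$ one has $L\widetilde u(t,x)=-(Lu)(-t,x)$ but $\de_t\widetilde u(t,x)=+(\de_s u)(-t,x)$ (the two minus signs, from the odd extension and from the chain rule, cancel), so
\begin{equation*}
 \LL\widetilde u(t,x)=-(Lu)(-t,x)-(\de_s u)(-t,x)=-\big((L+\de_s)u\big)(-t,x)=-2(\de_s u)(-t,x),
\end{equation*}
where the last equality uses $Lu=\de_s u$ on $\Omega$. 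Your computation wrote this as $-(Lu-\de_s u)(-t,x)$, which is a sign error: reflection in $t$ turns solutions of $L-\de_t$ into solutions of $L+\de_t$, the backward operator, and no choice of sign (odd or even) in the extension repairs this, since neither $u$ nor $\de_t u$ need vanish identically. So the reflected function solves the wrong equation on an open half-space, and the subsequent appeal to hypoellipticity and Theorem \ref{primo} collapses; this is not a fixable technicality of layer terms on $\{t=0\}$, which is where your ``hard part'' paragraph located the difficulty.

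The correct device — and the one the paper uses — is the \emph{trivial (zero) extension} $\overline u$, with $\overline u=0$ for $t<0$. Then $\LL\overline u=0$ trivially on both open half-spaces, and the only delicate point is indeed the one you anticipated, the absence of a distributional layer on $\{t=0\}$: testing against $\varphi\in C_0^\infty(\R^{1+n})$ and integrating by parts, the second-order part $L$ differentiates only in $x$ and produces no transversal boundary term, while the first-order drift $-\de_t$ produces the single term $-\int_{\R^n}u(\epsilon,x)\varphi(\epsilon,x)\,\d x$, which tends to $0$ as $\epsilon\to 0^+$ precisely by the initial condition $u(0,\cdot)=0$ and dominated convergence. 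Hypoellipticity (HY) then upgrades $\overline u$ to a global smooth $\LL$-harmonic function, and since $\overline u$ vanishes for $t<0$, the hypothesis $u\in L^p(\Omega,\RM)$ gives $\overline u\in L^p(\R^{1+n},\RM)$ with no compatibility issue whatsoever for the (non-even) density of $\RM$ — your worry about the weight $e^{t\,\mathrm{trace}(B)}$ evaporates with the zero extension. Theorem \ref{primo}(i) then yields $\overline u\equiv 0$. Note also why the zero extension works only thanks to the structure \eqref{formaLLLLCauchy}: if $\LL$ contained a $\de_t^2$ term, killing the single-layer contribution would additionally require $\de_t u=0$ at $t=0$, which is not assumed.
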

\begin{proof}
 Let us denote by $\overline{u}$ the trivial prolongation of $u$ on $\R^{1+n}$ obtained
 by setting $\overline{u}$ to be $0$ when $t<0$. Clearly, $u\in C(\R^{1+n},\R)\cap L^p(\R^{1+n},\RM)$,
 as $u\in L^p(\Omega,\RM)$. We claim
  \begin{equation}\label{CAUUUeq2}
  \overline{u}\in C^\infty(\R^{1+n},\R) \quad \text{and}\quad \LL \overline{u}=0 \,\,\text{on $\R^{1+n}$.}
  \end{equation}
 Once we have proved this, an application of Theorem \ref{primo} to $\overline{u}$ will prove that
 $\overline{u}\equiv 0$ on $\R^{1+n}$, i.e., $u=0$ on $\Omega$.

 We are then left to prove the claimed  \eqref{CAUUUeq2}. Since $\LL$ is hypoelliptic by assumption (HY),
 \eqref{CAUUUeq2} will follow if we show that $\LL \overline{u}=0$ on $\R^{1+n}$ in the weak sense of distributions.
 To this aim, let $\varphi\in C_0^\infty(\R^{1+n})$. We have the following computation:
\begin{align*}
    &\int_{\R^{1+n}} \overline{u}\,\LL^*\varphi=\int_{\R^{1+n}} \overline{u}\,(L^*\varphi+\de_{t}\varphi)=
  \int_{0}^\infty \Big(\int_{\R^{n}} u\,(L^*\varphi+\de_{x_n}\varphi) \,\d x\Big)\d t\\
 &=\lim_{\epsilon\to 0^+}\int_{\epsilon}^\infty \Big(\int_{\R^{n}} u\,L^*\varphi\,\d x
 +\int_{\R^{n}} u\,\de_{t}\varphi \,\d x\Big) \d t\\
 &\quad \text{(by \eqref{formaLLLLCauchy}, $L$ operates only in the $x$-variable and integration by parts is allowed})\\
 &=\lim_{\epsilon\to 0^+}\int_{\epsilon}^\infty \Big(\int_{\R^{n}} L u\,\varphi\,\d x+\int_{\R^{n}} u\,\de_{t}
 \varphi \,\d x\Big) \d t\\
 &\quad \text{(we use \eqref{CAUUUeq} and \eqref{formaLLLLCauchy}, ensuring that $L u=\de_{t}u$ on $\Omega$)}\\
 &=\lim_{\epsilon\to 0^+}\int_{\epsilon}^\infty \Big(\int_{\R^{n}} \de_t(u\,\varphi)\,\d x\Big)\d t
 =\lim_{\epsilon\to 0^+} \int_{\R^{n}}\Big(\int_{\epsilon}^\infty \de_t(u\,\varphi)\,\d t\Big)\d x\\
 &=\lim_{\epsilon\to 0^+} -\int_{\R^{n}}u(\epsilon,x)\,\varphi(\epsilon,x)\,\d x=0.
\end{align*}
 In the last identity we used the initial condition of \eqref{CAUUUeq}
 and a simple dominated-convergence argument (since $u\in C(\overline{\Omega})$).
 This completes the proof.
\end{proof}
 We explicitly remark that, among our examples,
 the operators
 \begin{itemize}
   \item $\LL_2$ in Section \ref{sec:MEXPgroupsINV}-(a),
   \item $\LL_2$ and $\mathcal{K}$ in Section \ref{sec:MEXPgroupsINV}-(b),
   \item $\LL$ in Section \ref{sec:MEXPgroupsINV}-(c),
  \item $\mathcal{H}$ in Section \ref{sec:MEXPgroupsINV}-(d)
 \end{itemize}
 all satisfy the structure assumptions in
  \eqref{formaLLLLCauchy}, so that Proposition \ref{pop:cauchyU}
  can be applied to them. Corollary \ref{cor.tyko} in the Introduction
  is a particular case of
  Proposition \ref{pop:cauchyU} (obtained by replacing the matrix $B$
  with $-B$).
\bibliographystyle{alpha}

\end{document}